\documentclass[11pt]{article}

\usepackage{tikz}
\usepackage{graphicx}
\usepackage{verbatim}
\usepackage{url}
\usepackage{fullpage}
\usepackage{amssymb,amsfonts,amsmath,amsthm}
\usepackage[colorlinks=true,linkcolor=blue,citecolor=red]{hyperref}
\usepackage[capitalise]{cleveref}

\newcommand{\FormatAuthor}[3]{
\begin{tabular}{c}
#1 \\ {\small\texttt{#2}} \\ {\small #3}
\end{tabular}
}

\newcommand{\ov}{\overline}

\newcommand{\A}{{\mathcal A}}
\newcommand{\B}{{\mathcal B}}
\newcommand{\C}{{\mathcal C}}

\newcommand{\I}{{\mathcal I}}

\newcommand{\poly}{{\rm poly}}

\newcommand{\NP}{\ensuremath{\mathcal{NP}}}

\renewcommand{\P}{\ensuremath{\mathcal{P}}}
\newcommand{\N}{{\mathbb N}}
\newcommand{\R}{{\mathbb R}}
\newcommand{\E}{{\mathbb E}}
\newcommand{\F}{{\mathcal F}}

\newcommand{\eps}{\epsilon}
\newcommand{\seq}{\subseteq}
\newcommand{\suq}{\supseteq}

\renewcommand{\int}{{\sf int}}

\newtheorem{theorem}{Theorem}[section]
\newtheorem{corollary}[theorem]{Corollary}
\newtheorem{proposition}[theorem]{Proposition}

\newtheorem{lemma}[theorem]{Lemma}
\newtheorem{claim}[theorem]{Claim}
\newtheorem{observation}[theorem]{Observation}

\newtheorem{question}[theorem]{Question}

\newenvironment{proof-sketch}{\noindent{\bf Sketch of Proof}\hspace*{1em}}{\qed\bigskip}
\newenvironment{proof-idea}{\noindent{\bf Proof Idea}\hspace*{1em}}{\qed\bigskip}
\newenvironment{proof-of-lemma}[1]{\noindent{\bf Proof of Lemma #1}\hspace*{1em}}{\qed\bigskip}
\newenvironment{proof-of-claim}[1]{\noindent{\bf Proof of Claim #1}\hspace*{1em}}{\qed\bigskip}
\newenvironment{proof-of-thm}[1]{\noindent{\bf Proof of Theorem #1}\hspace*{1em}}{\qed\bigskip}
\newenvironment{proof-attempt}{\noindent{\bf Proof Attempt.}\hspace*{1em}}{\qed\bigskip}

\newenvironment{remark}{\noindent{\it Remark.}}{\bigskip}

\renewcommand{\leq}{\leqslant}

\renewcommand{\geq}{\geqslant}
\renewcommand{\epsilon}{\varepsilon}
\newcommand{\uncut}{{\rm uncut}}
\newcommand{\cut}{{\rm cut}}

\title{On Coloring Random Subgraphs of a Fixed Graph}

\author{
\begin{tabular}[h!]{cc}
   \FormatAuthor{Igor Shinkar}{igors@berkeley.edu}{UC Berkeley}
\end{tabular}
}

\begin{document}

\maketitle
\thispagestyle{empty}

\begin{abstract}
Given an arbitrary graph $G$ we study the chromatic number of a random subgraph $G_{1/2}$
obtained from $G$ by removing each edge independently with probability $1/2$.
Studying $\chi(G_{1/2})$ has been suggested by Bukh~\cite{Bukh}, who asked whether
$\E[\chi(G_{1/2})] \geq \Omega( \chi(G)/\log(\chi(G)))$ holds for all graphs $G$.
In this paper we show that for any graph $G$ with chromatic number $k = \chi(G)$
and for all $d \leq k^{1/3}$ it holds that $\Pr[\chi(G_{1/2}) \leq d] < \exp \left(- \Omega\left(\frac{k(k-d^3)}{d^3}\right)\right)$.
In particular, $\Pr[G_{1/2} \text{ is bipartite}] < \exp \left(- \Omega \left(k^2 \right)\right)$.
The later bound is tight up to a constant in $\Omega(\cdot)$, and is attained when $G$ is the complete graph on $k$ vertices.

As a technical lemma, that may be of independent interest, we prove that
if in \emph{any} $d^3$ coloring of the vertices of $G$ there are at least
$t$ monochromatic edges, then $\Pr[\chi(G_{1/2}) \leq d] < e^{- \Omega\left(t\right)}$.

We also prove that for any graph $G$ with chromatic number $k = \chi(G)$ and independence number $\alpha(G) \leq O(n/k)$
it holds that $\E[\chi(G_{1/2})] \geq \Omega \left( k/\log(k) \right)$.
This gives a positive answer to the question of Bukh for a large family of graphs.
\end{abstract}


\section{Introduction}\label{sec:intro}

Given a graph $G = (V,E)$ and a parameter $p \in (0,1)$,
let $G_p$ denote a subgraph of $G$ where each edge of $G$ appears in $G_p$ with independently with probability $p$.
In this paper we study the chromatic number of $G_{1/2}$ for an arbitrary graph $G$,
whose chromatic number is equal to some parameter $k$.
Clearly, since $G_{1/2}$ is a subgraph of $G$, it holds that $\chi(G_{1/2}) \leq \chi(G)$.
If $G$ is the $k$-clique, then this is the well studied Erd\"{o}s-R\'{e}nyi random graph model~\cite{ER60},
where is it known that $\chi(G_{1/2}) = \Theta(\frac{k}{\log(k)})$ with high probability (see, e.g.~\cite{BollobasBook}).
By monotonicity we also see that if $\chi(G)=k$ and $G$ contains a $k$-clique, then with high probability $\chi(G_{1/2}) \geq \Omega(\frac{k}{\log(k)})$.
It is also not difficult to come up with an example of a graph $G$ for which $\chi(G_{1/2}) = \chi(G)$ with high probability.
(For instance, let $G$ be the complete $k$-partite graph with $\exp(k)$ vertices in each part.)
Given the foregoing examples, it is natural to ask whether
$\chi(G_{1/2})$ must be large with high probability for \emph{any} graph $G$ whose chromatic number $\chi(G)$ is large.
Studying $\chi(G_{1/2})$ has been suggested by Bukh~\cite{Bukh}, who asked the following question.

\medskip
\begin{minipage}{0.9\textwidth}
    \center{Is there a constant $c>0$ such that $\E[\chi(G_{1/2})]>c \cdot \frac{\chi(G)}{\log \chi(G)}$ for all graphs $G$?}
\end{minipage}

\medskip
\noindent
Recently there has been some work generalizing the classical result on random graphs,
asking about properties of random subgraphs of fixed graphs satisfying certain properties~\cite{Borgs1,Borgs2,FK13,KLS15}.
For example, there have been several results studying the emergence of a giant component in $G_p$ when $G$ is an expander graph~\cite{ABS04,FKM04,KS13},
and relating it to the well studied Erd\"{o}s-R\'{e}nyi random model~\cite{ER60}.
In this work we study a problem of a similar flavor, trying to relate the chromatic random of $G_p$ to the chromatic number of a random graph in the Erd\"{o}s-R\'{e}nyi random model.

In a slightly different context, this problem is also motivated by a recent work of Bennett et~al.~\cite{BRS},
who asked about the computational complexity of $\NP$-complete problems,
whose inputs come from a certain semi-random model.
In particular, they showed that many natural $\NP$-complete problems, such as finding the chromatic number of a graph,
or deciding whether a graph contains a Hamiltonian path, remain $NP$-hard
even in the seemingly relaxed situation, where the inputs to the problem
come from random subgraphs of worst case instances.
In particular, they proved that if $\chi(G) = k$, then
$\Pr[\chi(G_{1/2}) < d)] < \poly(\frac{1}{k-d^3})$ for all $d < k^{1/3}$.

\subsection{On the distribution of $\chi(G_{1/2})$}\label{sec:problem}

The problem of lower bounding the expected chromatic number of $G_{1/2}$
over all graphs $G$ with $\chi(G) = k$ has been asked several times in the past
\cite{open-problem-garden, Bellairs16}.
Not being able to answer this particular question, it is natural to ask a related, more refined, question.
Namely, what can we say about the \emph{distribution} of $G_{1/2}$.
In particular, is it possible to compare the distribution of $\chi(G_{1/2})$ for an arbitrary $G$ with $\chi(G) = k$
with the distribution of $\chi(G(k,1/2))$, the chromatic number of the Erd\"{o}s-R\'{e}nyi graphs?
For example, is it true that for all graphs $G$ with $\chi(G) = k$ and for all $d < ck/\log(k)$
if holds that $\Pr[\chi(G_{1/2}) \leq d] < \Pr[\chi(G(k,1/2)) \leq d]$?
In general, the answer to this question is negative, as can be observed by letting $k=3$ and taking $G = C_n$ to be the odd length cycle of length $n$.
Indeed, in this case $\Pr[\chi(G_{1/2}) \leq 2] = 1 - 2^{-n}$, while $\Pr[\chi(G(k,1/2)) \leq 2] = 7/8$.
This example can be easily extended to any $k$ divisible by $3$ and $d = 2k/3$:
for all $n \in \N$ sufficiently large there exists a graph $G$ with $\chi(G) = k$ such that
$\Pr[\chi(G_{1/2}) \leq 2k/3] \geq 1 - 2^{-c_1 n}$, while $\Pr[\chi(G(k,1/2)) \leq 2k/3] \leq 1 - c_2$,
where $c_1,c_2 > 0$ are some constants that depends only on $k$ (and are independent of $n$); we omit the details.
Nonetheless, it is natural to ask whether a relaxed comparison is true.

\begin{question}\label{prob:domination}
  Is there a fixed polynomial $\poly(\cdot)$ such that
  for every graph $G$ with $\chi(G) = k$ and for all $d \leq k$
  it holds that
  \[
    \Pr[\chi(G_{1/2}) \leq d] < \poly(\Pr[\chi(G(k,1/2)) \leq d]),
  \]
  where $G(k,1/2)$ is the random Erd\"{o}s-R\'{e}nyi graph?
\end{question}

Note that the probability of the event $\chi(G(k,1/2) \leq d$ can be bounded explicitly as follows.

\begin{proposition}
In the Erd\"{o}s-R\'{e}nyi random model $G(k,1/2)$
for all $d < \frac{k}{2\log(k)}$ it holds that
\[
    \Pr[\chi(G(k,1/2)) \leq d] < e^{- \Omega(\frac{k(k - d\log(d))}{d})}.
\]
In particular, $\Pr[G_{1/2} \text{ is bipartite}] < e^{- \Omega(k^2)}$.
\end{proposition}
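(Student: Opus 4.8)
The plan is to bound $\Pr[\chi(G(k,1/2))\le d]$ by the expected number of proper $d$-colorings via the first moment method, since $\{\chi\le d\}$ is exactly the event that some proper $d$-coloring exists. Fixing a coloring $c\colon[k]\to[d]$ with color-class sizes $s_1,\dots,s_d$ (so $\sum_i s_i=k$), the pairs of vertices that must avoid being edges are exactly the $\sum_i\binom{s_i}{2}$ pairs inside a common class, and these are distinct pairs, so $\Pr[c\text{ is proper}]=2^{-\sum_i\binom{s_i}{2}}$ by independence of the edges of $G(k,1/2)$. Summing over all colorings and grouping by size profile,
\[
  \E\bigl[\#\{\text{proper }d\text{-colorings}\}\bigr]=\sum_{s_1+\cdots+s_d=k}\binom{k}{s_1,\dots,s_d}\,2^{-\sum_i\binom{s_i}{2}}\le\binom{k+d-1}{d-1}\cdot\max_{(s_i)}\binom{k}{s_1,\dots,s_d}\,2^{-\sum_i\binom{s_i}{2}},
\]
and the prefactor $\binom{k+d-1}{d-1}\le 2^{k+d}$ will be absorbed into lower-order terms.

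The heart of the argument is the convexity estimate for the maximized term. Writing $\sum_i\binom{s_i}{2}=\frac12(\sum_i s_i^2-k)$ and using $\sum_i s_i^2\ge k^2/d$ (Cauchy--Schwarz), every profile satisfies $\sum_i\binom{s_i}{2}\ge\frac{k(k-d)}{2d}$; meanwhile $\binom{k}{s_1,\dots,s_d}\le d^k$, and by Stirling the balanced value is $2^{k\log_2 d+O(k)}$. Combining,
\[
  \Pr[\chi(G(k,1/2))\le d]\le\exp\!\Bigl(O(k)+k\log_2 d-\tfrac{k(k-d)}{2d}\Bigr)=\exp\!\Bigl(-\tfrac{k}{2d}\bigl(k-d-2d\log_2 d\bigr)+O(k)\Bigr).
\]
Now the hypothesis $d<\frac{k}{2\log k}$ (with $\log=\log_2$) forces $d(1+2\log_2 d)<k$, so the exponent is genuinely negative; rewriting $k-d-2d\log_2 d=(k-d\log_2 d)-(d+d\log_2 d)$ and using $\log d\le\log k$, one lower-bounds it by a constant multiple of $k-d\log d$, which yields $\Pr[\chi(G(k,1/2))\le d]<\exp(-\Omega(\tfrac{k(k-d\log d)}{d}))$. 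For the ``in particular'' clause, setting $d=2$ the exponent becomes $-\tfrac{k(k-2)}{4}(1+o(1))=-\Omega(k^2)$; this is tight, since a single fixed balanced bipartition of $[k]$ is edge-free inside its parts with probability $2^{-2\binom{k/2}{2}}=2^{-\Theta(k^2)}$.

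I expect the genuine obstacle to be the final rewriting of the exponent when $d$ is within a constant factor of the threshold $\frac{k}{2\log k}$: there $d\log d$ is a constant fraction of $k$, so the first-moment exponent $\tfrac{k}{2d}(k-d-2d\log_2 d)$ is only of order $k\log\log k$ whereas $\tfrac{k(k-d\log d)}{d}$ stays of order $k\log k$, and no universal constant in $\Omega(\cdot)$ can be extracted from the first moment alone in that narrow window. I would resolve this either by reading the hypothesis as $d\le(1-\epsilon)\frac{k}{2\log k}$ — for which the computation above goes through, with the implied constant depending on $\epsilon$ — or by tightening the union bound near the threshold, for instance restricting to colorings whose classes are maximal independent sets, which contributes an extra factor $\prod_j(1-2^{-|I_j|})^{k-|I_j|}$ penalizing profiles with many classes of near-average size. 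Everything else is routine Stirling and convexity bookkeeping.
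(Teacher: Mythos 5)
Your proposal is essentially the same argument as the paper's: a union bound over all $d$-colorings (equivalently, a first-moment bound on the number of proper colorings) together with a convexity estimate showing every $d$-coloring of $[k]$ has $\sum_i \binom{s_i}{2} \geq k^2/4d$ monochromatic pairs. The paper uses $d^k \cdot 2^{-k^2/4d}$ directly, where you group by size profiles and bound the multinomial by $d^k$; these are the same computation with a slightly different bookkeeping. So the approach matches.

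The subtlety you flag at the end is real, and in fact applies to the paper's own derivation: the paper obtains the exponent $\frac{k(k - 4d\log d)}{4d}$, which becomes nonpositive well before the stated threshold $d < \frac{k}{2\log k}$ is reached, whereas the target form $\frac{k(k - d\log d)}{d}$ stays of order $k\log k$ there. No choice of constant hidden in $\Omega(\cdot)$ fixes this if one reads the hypothesis and the target literally; the intended reading is surely that the $\Omega$ absorbs the constant in front of $d\log d$ as well, i.e., that the hypothesis should be $d < k/(c\log k)$ for a sufficiently large absolute constant $c$ (or, as you suggest, $d \le (1-\eps)\frac{k}{c\log k}$). With that reading, both your argument and the paper's go through verbatim, and there is no need for the more refined near-threshold union bound you sketch. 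The ``in particular'' clause ($d=2$) is unaffected and your tightness remark for it is correct.
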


\begin{proof}
We claim first that for all $d < k/2$, and for any partition of the $k$ vertices into $d$ classes, the $k$-clique
has at least $k^2/4d$ edges whose both endpoints belong to same class.
Indeed, let $V_1 \cup \cdots \cup V_d$ be a partition of the $k$ vertices into $d$ classes.
Then, the number of edges whose both endpoints belong to same class
is $\sum_{i=1}^d {|V_i| \choose 2} \geq d {\sum |V_i|/d \choose 2} = d {k/d \choose 2} \geq k^2/4d$,
where the leftmost inequality is by Jensen's inequality, using the fact that $x \mapsto \frac{x\cdot(x-1)}{2}$ is a convex function.

Therefore, the probability of removing all these edges is $2^{-k^2/4d}$.
By taking union bound over all $d^k$ possible $d$-colorings we conclude that
with high probability all possible $d$-colorings are violated.
Specifically, the probability that there exists a legal $d$-colorings in $G_{1/2}$ is upper bounded by
\[
    \Pr[\chi(G(k,1/2)) \leq d] \leq d^k \cdot 2^{-k^2/4d} = 2^{-(\frac{k(k - 4 d\log(d))}{4d})},
\]
as required.
\end{proof}

Note that the same argument applies also for any graph $G$
whose number of vertices is not too large compared to its chromatic number,
and gives a positive answer to \cref{prob:domination}
for all $n$-vertex graphs $G$ with $\chi(G) = k$ and for all $d$ such that $d \log(d) < k^2/8n$.

\begin{proposition}\label{prop:union bound argument}
Let $G = (V,E)$ is an $n$-vertex graph, whose chromatic number is $\chi(G) = k$.
Then for $d \leq k$ it holds that
\[
    \Pr[\chi(G_{1/2}) \leq d] \leq d^n \cdot 2^{-k^2/4d} = 2^{-(k^2/4d - n \log(d))}.
\]
In particular, if $n \leq Ck$, then $\E[\chi(G_{1/2})] \geq \frac{k}{8C\log(k)}$.
\end{proposition}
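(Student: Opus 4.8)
The plan is to follow the union-bound strategy of \cref{prop:union bound argument}'s clique predecessor, the only genuinely new ingredient being a lower bound on the number of monochromatic edges in an \emph{arbitrary} $d$-coloring of a general graph $G$ with $\chi(G)=k$. For the clique this came for free from convexity (the induced subgraph on a color class $V_i$ has exactly $\binom{|V_i|}{2}$ edges); for a general $G$ the graph $G[V_i]$ may be sparse, so a different argument is needed.

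First I would prove the following claim: if $c\colon V\to[d]$ is any coloring and $t_i$ denotes the number of edges of $G$ with both endpoints in color class $V_i$, then $\chi(G)\le d+\sqrt{2d\sum_i t_i}$. To see this, recall that a graph with $m$ edges has chromatic number at most $\tfrac12(1+\sqrt{1+8m})\le 1+\sqrt{2m}$: a critical subgraph witnessing the chromatic number has minimum degree at least $\chi-1$, hence at least $\binom{\chi}{2}$ edges. Apply this to each $G[V_i]$ to recolor class $V_i$ using at most $1+\sqrt{2t_i}$ fresh colors, with palettes chosen pairwise disjoint across the $d$ classes. The resulting coloring of $G$ is proper (within a class by construction, between two classes because their palettes are disjoint), and by Cauchy--Schwarz it uses at most $\sum_i(1+\sqrt{2t_i})\le d+\sqrt{2d\sum_i t_i}$ colors, proving the claim.

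Consequently, writing $t=\sum_i t_i$ for the total number of monochromatic edges of $c$, we get $k=\chi(G)\le d+\sqrt{2d\,t}$, hence $t\ge (k-d)^2/(2d)$, which is $\Omega(k^2/d)$ whenever $d\le k/2$ (and for $d>k/2$ the stated bound $d^n2^{-k^2/4d}$ is already $\ge 1$ once $n\ge k$, hence vacuous). Now fix a $d$-coloring $c$; in $G_{1/2}$ it is proper if and only if every one of its monochromatic edges was deleted, an event of probability at most $2^{-t}\le 2^{-k^2/4d}$. Taking a union bound over the at most $d^n$ colorings yields $\Pr[\chi(G_{1/2})\le d]\le d^n2^{-k^2/4d}=2^{-(k^2/4d-n\log d)}$.

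Finally, the ``in particular'' statement follows by a routine optimization: assuming $n\le Ck$, apply the displayed bound with $d_0=\Theta(k/\log k)$ chosen so that $k^2/(4d_0)$ exceeds $n\log d_0$ by a constant factor; then $\Pr[\chi(G_{1/2})<d_0]=o(1)$, so $\E[\chi(G_{1/2})]\ge d_0(1-o(1))\ge \tfrac{k}{8C\log k}$. The only real obstacle is the monochromatic-edge lower bound of the middle paragraphs — specifically, finding a substitute for the convexity argument that is robust to $G[V_i]$ being sparse; the recoloring trick above is what makes this go through, and once it is in hand the rest mirrors the proof of the preceding proposition.
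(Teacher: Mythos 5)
Your argument is essentially the paper's proof, just repackaged. Both rest on the same three ingredients: (i) $\sum_i\chi(G[V_i])\geq k$, because coloring each class with a fresh disjoint palette properly colors $G$; (ii) $G[V_i]$ has at least $\binom{\chi(G[V_i])}{2}$ edges (the $\chi$-critical / minimum-degree argument, which is exactly \cref{claim:chi vs |E|}); and (iii) a convexity step to aggregate across classes. The paper applies Jensen to $x\mapsto\binom{x}{2}$, while you invert the quadratic to $\chi_i\lesssim\sqrt{t_i}$ and apply Cauchy--Schwarz to $\sum\sqrt{t_i}$; these are the same inequality in different clothing. One small slip worth fixing: relaxing $\chi\leq\tfrac12(1+\sqrt{1+8m})$ to $\chi\leq 1+\sqrt{2m}$ costs you a factor of two, giving $t\geq(k-d)^2/(2d)\geq k^2/(8d)$ rather than the stated $k^2/(4d)$. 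Keeping the exact root of the quadratic gives $\sum(1+\sqrt{1+8t_i})\geq 2k$, hence (after Cauchy--Schwarz and squaring) $t\geq k(k-d)/(2d)\geq k^2/(4d)$ for $d\leq k/2$, which matches the paper exactly; for $d>k/2$ the bound is vacuous as you note. The effect on the ``in particular'' part is only a change of constant, so the conclusion survives either way.
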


\begin{proof}
Let $V_1 \cup \cdots \cup V_d$ be a partition of $V$ into $d$ classes.
Note that since $\chi(G) = k$ it follows that $\sum_{i=1}^d \chi(G[V_i]) \geq k$.
Observing that the number of edges induced by $V_i$ is at least ${\chi(G[V_i]) \choose 2}$
(see \cref{claim:chi vs |E|}) this implies that
$G$ has at least $\sum_{i=1}^d {\chi(G[V_i]) \choose 2} \geq d {\frac{1}{d}\sum_{i=1}^d \chi(G[V_i]) \choose 2} \geq \frac{1}{d} {k/d\choose 2} \geq k^2/4d$ edges whose both endpoints belong to same class,
where the leftmost inequality is again by Jensen's inequality

Therefore, the probability of removing all these edges is $2^{-k^2/4d}$.
By taking union bound over all $d^n$ possible $d$-colorings we conclude that
the probability that there exists a legal $d$-colorings in $G_{1/2}$ is upper bounded by
\[
    \Pr[\chi(G(k,1/2)) \leq d] \leq d^n \cdot 2^{-k^2/4d}.
\]
For the ``in particular'' part, note that if $n \leq Ck$ then $C \geq 1$,
and for $d = k/4C\log(k)$ it holds that
\[
    \Pr[\chi(G_{1/2}) \leq d]
    \leq 2^{-(k^2/4d - n \log(d))}
    \leq 2^{-(Ck \log(k) - Ck \log(d))}
    \leq 2^{-2Ck}
    < 0.5.
\]
Therefore,
\[
    \E[\chi(G_{1/2})] \geq d \cdot \Pr[\chi(G_{1/2}) \geq d] \geq \frac{k}{8C\log(k)},
\]
as required.
\end{proof}

Note that, in general, if the number of vertices in $G$ is much larger than $k$,
then the argument in the proof of \cref{prop:union bound argument}
will not work, since the union bound will not
suffice in order to bound the probability that \emph{all} possible coloring are violated.
As a concrete example, consider a graph that consists of a $k$-clique connected to
path of length $N \gg k$. Then, the na\"ive union bound will be over $d^N$ possible colorings,
although the $N$ vertices on the path are ``irrelevant'', and we would like to avoid ``paying''
union bound for them.

\subsection{Our results}\label{sec:result}
The main result in this paper gives a positive answer to the question when $d$ is a constant independent of $k$.

\begin{theorem}\label{thm:tiny chi}
    Let $G = (V,E)$ be an arbitrary graph with $\chi(G) = k$.
    \begin{enumerate}
    \item\label{item:bipartite}
        $\Pr[\text{$G_{1/2}$ is bipartite}] < \exp \left(- \Omega \left(k^2 \right)\right)$.
    \item\label{item:chi leq d}
        For all $d \leq k^{1/3}$ it holds that $\Pr[\chi(G_{1/2}) \leq d] < \exp \left(- \Omega\left(\frac{k(k-d^3)}{d^3}\right)\right)$.  \label{item:d < k^(1/3)}
    \item\label{item:martingale}
        For all $0 < \eps < 1/2$ it holds that $\Pr[\chi(G_{1/2}) \leq (1-\eps)\sqrt{k}] < \exp \left(- \Omega\left(\eps^2\sqrt{k}\right)\right)$.
    \end{enumerate}
\end{theorem}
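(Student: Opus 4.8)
The plan is to obtain \ref{item:bipartite} and \ref{item:chi leq d} from the technical lemma announced in the abstract --- \emph{if in every $d^3$-coloring of $V(G)$ there are at least $t$ monochromatic edges, then $\Pr[\chi(G_{1/2}) \le d] < e^{-\Omega(t)}$} --- and to prove \ref{item:martingale} by a separate argument. First, \ref{item:bipartite} is merely the case $d = 2$ of \ref{item:chi leq d}: for $k$ above an absolute constant one has $k(k-8)/8 = \Omega(k^2)$, and $\chi(G_{1/2}) \le 2$ is exactly the event that $G_{1/2}$ is bipartite. So it suffices to prove \ref{item:chi leq d} and \ref{item:martingale}.

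For \ref{item:chi leq d}: by the technical lemma it is enough to show that every partition of $V$ into $m := d^3$ classes $V_1, \dots, V_m$ leaves at least $\Omega(k(k-d^3)/d^3)$ monochromatic edges. This is the computation already carried out in the proof of \cref{prop:union bound argument}, now with $m = d^3$ classes in place of $d$: since the chromatic number is subadditive over a vertex partition, $\sum_{i} \chi(G[V_i]) \ge \chi(G) = k$; since $G[V_i]$ has at least $\binom{\chi(G[V_i])}{2}$ edges (\cref{claim:chi vs |E|}); and since $x \mapsto \binom{x}{2}$ is convex and increasing on $x \ge 1$, Jensen's inequality gives at least $m \binom{k/m}{2} = k(k - d^3)/(2d^3)$ monochromatic edges. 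Feeding $t = k(k-d^3)/(2d^3)$ into the technical lemma gives the stated bound. The only real work here is the technical lemma itself, whose purpose is to circumvent the naive union bound over all $d^n$ vertex colorings --- a bound which, as the discussion after \cref{prop:union bound argument} notes, is hopeless when $n \gg k$.

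For \ref{item:martingale} the technical lemma gives nothing: when $d \approx \sqrt k$ we have $d^3 > k = \chi(G)$, so $G$ is itself $d^3$-colorable and $t = 0$. Instead I would start from the multiplicativity of $\chi$ under edge partitions: if $E(G) = E_1 \sqcup E_2$, then $\chi(G) \le \chi((V,E_1)) \cdot \chi((V,E_2))$, since the pair of colors assigned to a vertex by optimal colorings of the two parts is a proper coloring of $G$. Applying this to $E(G) = E(G_{1/2}) \sqcup E(\bar G_{1/2})$, where $\bar G_{1/2} := G \setminus G_{1/2}$, gives $\chi(G_{1/2}) \cdot \chi(\bar G_{1/2}) \ge k$ pointwise, hence $\chi(G_{1/2}) + \chi(\bar G_{1/2}) \ge 2\sqrt k$; and since $\bar G_{1/2}$ has the same law as $G_{1/2}$, taking expectations yields $\E[\chi(G_{1/2})] \ge \sqrt k$. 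To convert this into a tail bound I would use a vertex-exposure (Doob) martingale driven by a fixed optimal proper $k$-coloring $W_1, \dots, W_k$ of $G$: reveal the edges of $G_{1/2}$ in $k$ rounds, round $i$ exposing the edges incident to $W_i$, and set $Y_i = \E[\chi(G_{1/2}) \mid \mathcal{F}_i]$. This martingale has Lipschitz constant $1$, because re-sampling the edges incident to the independent set $W_i$ changes $\chi(G_{1/2})$ by at most $1$: one has $\chi(G_{1/2} - W_i) \le \chi(G_{1/2}) \le \chi(G_{1/2} - W_i) + 1$, and $\chi(G_{1/2} - W_i)$ does not depend on the resampled edges. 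Azuma's inequality, applied at deviation $\E[\chi(G_{1/2})] - (1-\eps)\sqrt k \ge \eps\sqrt k$, then bounds $\Pr[\chi(G_{1/2}) \le (1-\eps)\sqrt k]$.

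The step I expect to be the bottleneck is this last one. A plain Azuma estimate over $k$ unit-Lipschitz steps yields only $\Pr[\chi(G_{1/2}) \le (1-\eps)\sqrt k] \le \exp(-\Omega(\eps^2))$, a constant, whereas \ref{item:martingale} asserts $\exp(-\Omega(\eps^2 \sqrt k))$. Recovering the correct scale appears to require either a Bernstein/Freedman-type refinement that controls the martingale's total conditional variance (showing it is $O(\E[\chi(G_{1/2})])$, of order $\sqrt k$, rather than $O(k)$), or a different auxiliary Lipschitz statistic, in the spirit of the sharp-concentration arguments for $\chi(G(n,1/2))$. For \ref{item:bipartite}--\ref{item:chi leq d} the corresponding difficulty lies entirely inside the technical lemma: one must expose $G_{1/2}$ incrementally while charging the $\ge t$ forced monochromatic (hence absent) edges to fresh independent coin flips, so that the final estimate is $e^{-\Omega(t)}$ with no dependence on the number of vertices.
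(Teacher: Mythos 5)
For items \ref{item:bipartite} and \ref{item:chi leq d} your derivation from the technical lemma (\cref{lemma:t monochromatic edges missed}) is the same as the paper's; the paper happens to prove \ref{item:bipartite} directly rather than as the $d=2$ case of \ref{item:chi leq d}, but the Jensen computation is identical. Your setup for \ref{item:martingale} --- the observation $\chi(G_{1/2})\cdot\chi(G\setminus G_{1/2}) \ge k$ giving $\E[\chi(G_{1/2})]\ge\sqrt{k}$, followed by a Doob martingale that exposes edges color class by color class and has increments bounded by $1$ --- is also precisely the paper's (\cref{obs:sqrt(n)} and \cref{lemma:martingale}). The gap you flag is real and you have located the crux: plain Azuma over $k$ steps only yields $\exp(-\Omega(\eps^2))$, not $\exp(-\Omega(\eps^2\sqrt{k}))$. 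The paper closes it not by a conditional-variance estimate but by invoking a black-box ``multiplicative Azuma'' inequality attributed to Alon~et~al.\ (\cref{prop:AGGL}): for a $1$-Lipschitz martingale $X_1,\dots,X_k$ one has $\Pr[|X_k/\E[X_k]-1|>\eps]<O(e^{-\Omega(\eps^2\E[X_k])})$, so the concentration scale is $\E[X_k]\ge\sqrt{k}$ rather than the number of steps $k$. That is exactly the refinement your Freedman-style sketch is reaching for, but the paper simply cites it; without some such result your proposal does not complete \ref{item:martingale}.

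A secondary remark: your speculative route to the technical lemma (``expose $G_{1/2}$ incrementally and charge the $\ge t$ forced monochromatic edges to fresh coins'') is not what the paper does and it is not clear it can work: a single $d$-coloring of $V$ may have far fewer than $t$ monochromatic edges, since the hypothesis only controls $d^3$-colorings. The paper's actual argument is quite different and explains the cube: it shows that the family $\{\uncut(\A):\A\in\P_d\}$ of monochromatic-edge sets over all $d$-colorings $\A$ is $3$-wise $t$-intersecting --- because the common refinement $\{\A_i\cap\B_j\cap\C_\ell\}$ of any three $d$-colorings is a $d^3$-coloring, hence forces $\ge t$ common monochromatic edges --- and then bounds the monotone closure of this family by Frankl's extremal theorem on $3$-wise $t$-intersecting families (\cref{thm:Frankl}). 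This sidesteps any union bound over $d^n$ colorings entirely.
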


Note that $\Pr[\text{$G_{1/2}$ is bipartite}] = \exp(-\Theta(k^2))$, and hence the first item
of the theorem is essentially tight, and gives a positive answer to \cref{prob:domination}.
Slightly more generally, for any $d$ it holds that
$\Pr[\chi(G(k,1/2)) \leq d] \geq \exp(-\Theta(k^2/d))$, and hence for constant $d$ (independent of $k$)
the second item of the theorem gives a positive answer to \cref{prob:domination}.

As part of the proof of \cref{thm:tiny chi} we prove the following technical lemma.
The lemma says that if $G$ is \emph{$t$-far from being $d$-colorable},
then $\Pr[\chi(G_{1/2}) \leq d'] \leq \exp(-\Omega(t))$ for some $d'$ that depends on $d$,
where $t$-far from being $d$-colorable means that \emph{every} $d$-coloring of the vertices
has at least $t$ monochromatic edges.

\begin{lemma}\label{lemma:t monochromatic edges missed}
    Let $G = (V,E)$ be an arbitrary graph, and let $d,t \in \N$ be parameters.
    If in every coloring of the vertices of $G$ with $d^3$ colors
    there are at least $t$ monochromatic edges,
    then $\Pr[\chi(G_{1/2}) \leq d] < \left(\frac{\sqrt{5}-1}{2}\right)^{t} < 0.62^t$.
\end{lemma}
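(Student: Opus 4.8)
For a map $c:V\to[d]$ let $M(c)=\{uv\in E: c(u)=c(v)\}$ be its monochromatic edge set, and let $R=E\setminus E(G_{1/2})$ be the (uniformly random) set of deleted edges, each edge lying in $R$ independently with probability $\tfrac12$. A coloring $c$ is proper for $G_{1/2}$ precisely when $M(c)\subseteq R$, so $\Pr[\chi(G_{1/2})\le d]=\Pr[\exists\,c:V\to[d]\ \text{with}\ M(c)\subseteq R]$. For any three colorings $c_1,c_2,c_3:V\to[d]$, the product coloring $x\mapsto(c_1(x),c_2(x),c_3(x))$ uses at most $d^3$ colors and has monochromatic edge set $M(c_1)\cap M(c_2)\cap M(c_3)$, so the hypothesis forces $|M(c_1)\cap M(c_2)\cap M(c_3)|\ge t$. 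Hence the first step is to reduce the lemma to the purely combinatorial statement: if $\mathcal F$ is a family of subsets of a finite set with $|A\cap B\cap C|\ge t$ for all $A,B,C\in\mathcal F$ (a ``$3$-wise $t$-intersecting'' family) and $R$ is a uniformly random subset, then $\Pr[\exists A\in\mathcal F:A\subseteq R]\le\phi^{t}$ for $\phi=\tfrac{\sqrt5-1}{2}$; since $\phi<0.62$ this gives the lemma (the harmless case $t=0$ aside, where $t\ge1$ is anyway what is intended).

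\medskip\noindent\textbf{The induction and the golden ratio.} The base $\phi$ is dictated by the identity $\phi^{2}+\phi=1$, equivalently $\phi^{3}+1=2\phi$. I would prove the combinatorial statement by induction in a slightly generalised ``pinned'' form: keep a set $P\subseteq E$ of edges declared present, require only that every $d^{3}$-coloring proper on $P$ has at least $\tau$ monochromatic edges, and induct on the number $|E\setminus P|$ of free edges. For a free edge $e$, condition on whether $e$ survives in $G_{1/2}$: if $e$ is deleted (probability $\tfrac12$) we face the same instance with $e$ removed, where $\tau$ has dropped by at most $1$; if $e$ is present (probability $\tfrac12$) we face the same instance with $e$ moved into $P$, where $\tau$ has not dropped. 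If some free edge can be deleted without dropping $\tau$, a coupling/monotonicity argument ($\chi$ only grows when edges are added) finishes the step immediately; otherwise the target is to pick a free edge whose forcing raises $\tau$ by at least $2$, so that the two branches combine as
\[
 \Pr[\chi(G_{1/2})\le d]\ \le\ \tfrac12\,\phi^{\,\tau-1}+\tfrac12\,\phi^{\,\tau+2}\ =\ \tfrac12\,\phi^{\,\tau-1}\bigl(1+\phi^{3}\bigr)\ =\ \phi^{\,\tau}.
\]
(In the degenerate case where some edge is monochromatic in every coloring the deleted branch vanishes and one only needs $\tfrac12\le\phi$.)

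\medskip\noindent\textbf{The obstacle.} The delicate point is exactly this edge selection in the ``critical'' regime, where every free edge simultaneously drops $\tau$ when deleted and raises it by at most $1$ when forced. Carrying the recursion through there requires locating a free edge that is monochromatic in \emph{every} $d^{3}$-coloring which is proper on $P$ and within one of optimal --- i.e. a large common monochromatic core shared by all near-optimal colorings. I expect this to be the real technical content: it cannot be read off one edge at a time and seems to need the $3$-wise intersection property used globally, for instance by restricting the sets $M(c)$ to a minimum-size member of $\mathcal F$ (obtaining a pairwise $t$-intersecting family on a small ground set) and invoking a Frankl-type structure theorem or a direct extremal estimate. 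Everything else --- the reformulation, the product-coloring trick, and the two-branch bookkeeping --- is routine once $\phi^{2}+\phi=1$ is kept in view.
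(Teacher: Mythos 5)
Your reformulation is exactly the paper's: you define $M(c)$, observe that $G_{1/2}$ is $d$-colorable iff some $M(c)\subseteq R$, and use the product-coloring trick to show that $\{M(c):c:V\to[d]\}$ (equivalently its monotone up-closure inside $2^{E}$) is a $3$-wise $t$-intersecting family. That reduction is correct and is precisely how the paper proceeds. What you have not done is prove the extremal estimate you reduce to, namely that a $3$-wise $t$-intersecting family in $2^{E}$ has density at most $\bigl(\tfrac{\sqrt5-1}{2}\bigr)^{t}$, and you say so yourself: your two-branch induction needs, in the critical case, a free edge whose forcing raises $\tau$ by at least $2$, and you have no argument that such an edge exists. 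That is not a small detail to be filled in later; it is the entire content of the bound. In particular, working ``one edge at a time'' in this way runs into exactly the kind of delicate structural question (``is there an edge monochromatic in every near-optimal coloring?'') that does not obviously have a yes answer, and the known proofs do not go this route.

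The paper closes the gap by citing Frankl's theorem on $r$-wise $t$-intersecting families (Claim 9.2 of Frankl's paper): any $3$-wise $t$-intersecting $\mathcal F\subseteq 2^{X}$ has $|\mathcal F|<\bigl(\tfrac{\sqrt5-1}{2}\bigr)^{t}\cdot 2^{|X|}$. With that in hand the probability is simply $|U|/2^{|E|}\le\phi^{t}$ where $U$ is the up-closure. You briefly mention ``a Frankl-type structure theorem'' at the end as a possible way through --- that instinct is right, and you should lean on it directly rather than invoke it only as a backstop inside an induction you cannot close. Your identity $\phi^{2}+\phi=1$ correctly explains why the base $\phi$ appears (it is tight for the family of all supersets of one of $\{\{1,2\},\{2,3\},\{1,3\}\}$ repeated $t$ times, which is $3$-wise $t$-intersecting with density exactly $(3/4)^{t}\cdot$(something); more precisely Frankl's extremal construction achieves $\phi^{t}$ asymptotically), but the identity alone does not substitute for the extremal theorem.
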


\medskip

Next, we study $\chi(G_{1/2})$ for a special (rather large) class of graphs.
Note that if $G$ is an $n$-vertex graph with $\chi(G)=k$, then $G$ contains
an independent set of size $n/k$. In many cases the maximal independent
set of $G$ is within a multiplicative constant factor of $n/k$, i.e.,
$\alpha(G) \leq C \cdot \frac{n}{k}$ for some $C>1$ that is not too large.
For example, the random graph models $G(n,p)$ and $G(n,d)$ satisfy this property
with high probability for all $p > \frac{1}{n}$ and $d \geq 2$ (see, e.g.,~\cite{BollobasBook}).
For such graphs we prove the following theorem.

\begin{theorem}\label{thm:alpha bound}
    Let $G = (V,E)$ be a graph with $\alpha(G) \leq C \cdot \frac{n}{k}$ for some $C>1$.
    Then for all $d \leq \frac{k}{16 C \log(k)}$ it holds that
    \[
        \Pr[\chi(G_{p}) \leq d] \leq \Pr[\alpha(G_{p}) \geq n/d] \leq 2^{-\frac{p k n}{8 C d^2}}.
    \]
    In particular, for all $p > \frac{1}{k}$ it holds that
    \[
        \E[\chi(G_{p})] \geq \frac{pk}{32C \log(pk)}.
    \]
\end{theorem}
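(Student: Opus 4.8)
The first inequality is immediate: if $\chi(G_p)\le d$ then $G_p$ admits a proper $d$-coloring, some color class $I$ has $|I|\ge n/d$ by pigeonhole, and $I$ is independent in $G_p$, so $\alpha(G_p)\ge n/d$. The real content is the bound on $\Pr[\alpha(G_p)\ge n/d]$, and the plan is a first-moment argument: let $Z$ count the sets $S\subseteq V$ with $|S|=n/d$ that are independent in $G_p$; then $\{\alpha(G_p)\ge n/d\}=\{Z\ge 1\}$, so by Markov's inequality it suffices to bound $\E[Z]$.

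The heart of the matter is a uniform lower bound on $e_G(S)$, the number of edges of $G$ with both endpoints in $S$, over all $S$ with $|S|=n/d$; this is where $\alpha(G)\le Cn/k$ is used. Since $\alpha(G[S])\le\alpha(G)\le Cn/k$, the complement of $G[S]$ contains no clique of size $\alpha(G)+1$, so Turán's theorem (applied to that complement) gives $e_G(S)\ge\frac{|S|^2}{2\alpha(G)}-\frac{|S|}{2}\ge\frac{nk}{2Cd^2}-\frac{n}{2d}=\frac{n}{2d}\bigl(\frac{k}{Cd}-1\bigr)$. (Alternatively one can use $e_G(S)\ge\binom{\chi(G[S])}{2}$ via \cref{claim:chi vs |E|} together with $\chi(G[S])\ge|S|/\alpha(G[S])$, but the Turán form is what retains the factor $n$ that the target bound contains.) The hypothesis $d\le\frac{k}{16C\log k}$ forces $\frac{k}{Cd}\ge 16\log k\ge 16$, so the bracket is at least $\frac{15}{16}\cdot\frac{k}{Cd}$, hence $e_G(S)\ge\frac{kn}{4Cd^2}$ for every such $S$ — a bound independent of $S$.

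Consequently $\Pr[S\text{ independent in }G_p]=(1-p)^{e_G(S)}\le e^{-p\,e_G(S)}\le e^{-pkn/(4Cd^2)}$, and summing over the $\binom{n}{n/d}\le(ed)^{n/d}$ choices of $S$ gives $\Pr[\alpha(G_p)\ge n/d]\le\E[Z]\le(ed)^{n/d}\cdot e^{-pkn/(4Cd^2)}$. Taking $\log_2$, the exponent is $\tfrac nd\log(ed)-\Theta\!\bigl(\tfrac{pkn}{Cd^2}\bigr)$, and the final step is to verify that the union-bound term $\tfrac nd\log(ed)$ is absorbed by, say, half of $\tfrac{pkn}{4Cd^2}$ — equivalently that $\log(ed)=O\!\bigl(\tfrac{pk}{Cd}\bigr)$ — which leaves $\Pr[\alpha(G_p)\ge n/d]\le 2^{-pkn/(8Cd^2)}$. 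This uses $\frac{k}{Cd}\ge 16\log k\ge\log(ed)$; the arithmetic is cleanest when $p$ is bounded below by a constant (in particular for $p=\tfrac12$, the main case), and in general the threshold in the admissible range of $d$ is exactly what makes $\tfrac{pk}{Cd}$ dominate $\log(ed)$. I expect this bookkeeping — confirming that the number of candidate independent sets is outweighed by the minuscule probability of each — to be the only delicate point; the edge count via Turán's theorem is the genuine idea and is short.

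For the final assertion, apply the probability bound with an integer $d$ of order $\frac{pk}{C\log(pk)}$ that still satisfies the admissibility condition. Since $n\ge k/C$ (because $1\le\alpha(G)\le Cn/k$) and $p\le 1<pk$, one checks that $\frac{pkn}{8Cd^2}$ is at least a constant, so $\Pr[\chi(G_p)\le d]<\tfrac12$, whence $\E[\chi(G_p)]\ge d\cdot\Pr[\chi(G_p)>d]=\Omega\!\bigl(\frac{pk}{C\log(pk)}\bigr)$, which is the asserted bound up to the absolute constant; for $pk$ so small that no admissible $d\ge 1$ exists the inequality holds trivially since $\chi(G_p)\ge 1$.
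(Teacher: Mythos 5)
Your proof is correct and follows essentially the same route as the paper: lower-bound the number of edges inside any set $S$ of size $n/d$ via the independence number, then union-bound over all such $S$. Your explicit invocation of Tur\'an's theorem is exactly what the paper computes inline (the bound $\frac{\ell(\ell/\alpha - 1)}{2}$ is the complementary Tur\'an bound), and your aside that the $\binom{\chi(G[S])}{2}$ route loses the factor of $n$ is a correct and worthwhile observation. You also correctly flag the one real wrinkle: with the admissibility range $d\le\frac{k}{16C\log k}$ as stated (with no $p$), the absorption of $\frac nd\log(ed)$ into $\frac{pkn}{8Cd^2}$ requires $p$ bounded away from $0$; the paper's own proof quietly switches to the condition $d\le\frac{pk}{16C\ln(pk)}$, which is what makes the bookkeeping go through for general $p$, so your proof and the paper's share the same minor discrepancy with the theorem statement.
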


The following corollary follows immediately from \cref{thm:alpha bound}.
\begin{corollary}\label{cor:subgraph col}
    Let $G = (V,E)$ be a graph with $\chi(G) = k$,
    and suppose that $G$ contains a subgraph $G' = (V',E')$ with $V' \seq V$
    such that $\alpha(G') \leq C \frac{|V'|}{k}$.
    Then,
    \[
        \E[\chi(G_{1/2})] \geq \frac{k}{8C\log(k)}.
    \]
\end{corollary}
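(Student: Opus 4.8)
The plan is to deduce the corollary from \cref{thm:alpha bound} by first passing from $G$ to its subgraph $G'$. The key point is that coloring a random subgraph is monotone under taking (not necessarily induced) subgraphs: couple $G_{1/2}$ and $G'_{1/2}$ by using, for each edge $e \in E'$, the very same coin flip that decides whether $e$ is retained in $G_{1/2}$. Under this coupling $G'_{1/2}$ is a subgraph of $G_{1/2}$, hence $\chi(G'_{1/2}) \le \chi(G_{1/2})$ pointwise, and therefore $\E[\chi(G_{1/2})] \ge \E[\chi(G'_{1/2})]$. It thus suffices to lower bound $\E[\chi(G'_{1/2})]$, and for this the subgraph $G'$ no longer matters except through its vertex count and its independence number.

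Next I would apply \cref{thm:alpha bound} directly to $G'$, taking the role of the parameter ``$n$'' to be $n' := |V'|$ and the role of ``$k$'' to be $k = \chi(G)$ itself (not $\chi(G')$). This is legitimate because the hypothesis of \cref{thm:alpha bound} is exactly $\alpha(G') \le C \cdot n'/k$, which is what we are given, and the statement of the theorem never refers to the chromatic number of the graph it is applied to. I would also record the elementary chain
\[
    n' \ \ge\ \chi(G') \ \ge\ \frac{n'}{\alpha(G')} \ \ge\ \frac{n'}{C n'/k} \ =\ \frac{k}{C},
\]
i.e. $n' \ge k/C$, which is what makes the tail bound quantitatively strong enough. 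Now one can either feed $p = 1/2$ into the ``in particular'' clause of \cref{thm:alpha bound}, which already gives $\E[\chi(G'_{1/2})] \ge \Omega\!\big(k/(C\log k)\big)$, or, more explicitly, take $d = \floor{k/(16C\log k)}$ in the first part of the theorem: the resulting bound $\Pr[\chi(G'_{1/2}) \le d] \le 2^{-n' k/(16 C d^2)}$ has exponent $\Omega(\log^2 k)$ once $n' \ge k/C$ is substituted, so this probability is $o(1)$, and hence $\E[\chi(G_{1/2})] \ge \E[\chi(G'_{1/2})] \ge d \cdot \Pr[\chi(G'_{1/2}) \ge d] \ge d(1 - o(1))$, which is the claimed bound.

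I do not expect a genuine obstacle: all the work lives in \cref{thm:alpha bound}, and this corollary is a packaging statement. The only two points requiring a line of care are (i) the justification, given above, that the parameter ``$k$'' in \cref{thm:alpha bound} may be instantiated with $\chi(G)$ even though $G'$ may have strictly smaller chromatic number; and (ii) the case of $k$ below an absolute constant, where the choice of $d$ via the floor degenerates — but there $k/(8C\log k) \le 1$ while $\chi(G_{1/2}) \ge 1$ always, so the inequality is trivial. Matching the exact constant $\tfrac18$ in the statement is then just a matter of tracking the (non-optimized) constants through \cref{thm:alpha bound}.
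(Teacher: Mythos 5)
Your proof takes the same route as the paper's one-line argument: couple $G_{1/2}$ with $G'_{1/2}$ to get $\E[\chi(G_{1/2})]\ge\E[\chi(G'_{1/2})]$, then apply \cref{thm:alpha bound} to $G'$ with the parameter $k$ instantiated as $\chi(G)$, a step you correctly justify by noting that the theorem's proof uses only the hypothesis $\alpha\le Cn/k$ and never the chromatic number of the graph it is applied to. The one caveat is that a literal reading of the ``in particular'' clause of \cref{thm:alpha bound} at $p=1/2$ yields $\frac{k}{64C\log(k/2)}$ rather than $\frac{k}{8C\log k}$, so ``tracking constants'' does not quite close the gap — but this looseness is present in the paper's own proof of the corollary, so your proposal is consistent with it.
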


\paragraph{On Hadwiger number of a random subgraph of a fixed graph:}
Next, we discuss a related graph parameter, called the \emph{Hadwiger number} of a graph.
Hadwiger number of a graph $G$, denoted by $h(G)$, is the maximal $t \in \N$ such that $G$ contains $K_t$ as a minor.
Hadwiger's conjecture states that $h(G) \geq \chi(G)$ for all graphs $G$.
While the conjecture is open for general graphs, the inequality $h(G) \geq \chi(G)$ is known to hold for a random graph $G(n,1/2)$ with high probability.
Mader~\cite{Mader68} proved an approximate version of the conjecture, namely that $h(G) \geq \Omega \left(\frac{\chi(G)}{\log(k)} \right)$
for all graphs $G$.
Kostochka~\cite{Kostochka84} improved Mader's result, and showed that $h(G) \geq \Omega \left(\frac{\chi(G)}{\sqrt{\log(k)}} \right)$ for all graphs $G$.
Motivated by this line of research Adrian Vetta~\cite{Vetta} asked the following question.
What is $\min\{ \E[h(G_{1/2})] : G \text{ such that } \chi(G) = k \}$?
We note that an almost tight answer to this question follows almost immediately from Kostochka's work~\cite{Kostochka84}.

\begin{theorem}\label{thm:hadwiger}
    Let $G = (V,E)$ be an $n$-vertex graph with $\chi(G) = k$.
    Then,
    \[
        \Pr \left[h(G) \geq \Omega \left( \frac{k}{\sqrt{\log(k)}} \right) \right] \geq 1 - \exp \left(-\Omega( k^2) \right ).
    \]
    In particular, $\E[h(G_{1/2})] \geq \Omega \left( \frac{k}{\sqrt{\log(k)}} \right)$.
\end{theorem}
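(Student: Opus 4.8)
The plan is to avoid reasoning about $\chi(G_{1/2})$ directly --- which, as the examples in \cref{sec:problem} show, can be far smaller than $k$ --- and instead to exploit the fact that \emph{edge density} is much more robust to random edge deletion than the chromatic number. Since $\chi(G) = k$, the graph $G$ contains a subgraph $H$ of minimum degree at least $k-1$ (otherwise $G$ would be $(k-2)$-degenerate, hence $(k-1)$-colorable). In particular $N := |V(H)| \geq k$ and $m := |E(H)| \geq (k-1)N/2 = \Omega(k^2)$.

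First I would show that $H_{1/2}$ remains dense with probability $1 - \exp(-\Omega(k^2))$. The quantity $|E(H_{1/2})|$ is a sum of $m$ independent $\{0,1\}$ random variables with mean $m/2$, so a Chernoff bound gives $\Pr[\,|E(H_{1/2})| < m/4\,] \leq \exp(-\Omega(m)) = \exp(-\Omega(k^2))$. On the complementary event $H_{1/2}$ has at least $(k-1)N/8$ edges on $N$ vertices, i.e.\ average degree at least $(k-1)/4 = \Omega(k)$.

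Finally I would invoke Kostochka's theorem~\cite{Kostochka84} in its quantitative form: there is an absolute constant $c>0$ such that every graph of average degree at least $D$ contains $K_t$ as a minor for $t = \Omega(D/\sqrt{\log D})$ (equivalently, a $K_t$-minor-free graph has average degree $O(t\sqrt{\log t})$). Applied with $D = (k-1)/4$ to $H_{1/2} \seq G_{1/2}$, this produces a $K_t$-minor with $t = \Omega(k/\sqrt{\log k})$, so $h(G_{1/2}) = \Omega(k/\sqrt{\log k})$ on the good event. Hence $\Pr[\,h(G_{1/2}) \geq \Omega(k/\sqrt{\log k})\,] \geq 1 - \exp(-\Omega(k^2))$, and since $h(G_{1/2}) \geq 1$ always, $\E[h(G_{1/2})] \geq (1 - \exp(-\Omega(k^2))) \cdot \Omega(k/\sqrt{\log k}) = \Omega(k/\sqrt{\log k})$ (for the finitely many small values of $k$ where this expression is $O(1)$ the bound is trivial).

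There is no real obstacle here --- this is why the statement is advertised as following ``almost immediately'' from \cite{Kostochka84}. The only points needing a little care are (i) recalling the precise form of Kostochka's bound, so that the density of $H_{1/2}$ translates into a $K_t$-minor with $t = \Omega(k/\sqrt{\log k})$ and not something weaker, and (ii) the conceptual step of fixing the dense subgraph $H$ of the original graph \emph{before} randomizing, rather than attempting to lower bound $\chi(G_{1/2})$, which can collapse even though the edge count of a color-critical subgraph cannot.
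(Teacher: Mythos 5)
Your proof is correct and takes essentially the same route as the paper's: the paper extracts a $k$-critical subgraph $G'$ (which, like your min-degree-$(k-1)$ subgraph obtained from the degeneracy argument, has minimum degree at least $k-1$ and hence $\Omega(k^2)$ edges), applies a Chernoff bound to keep the edge count of $G'_{1/2}$ within a constant factor, and then invokes Kostochka's average-degree criterion for a $K_t$-minor. The two ways of producing the dense subgraph are interchangeable standard facts, so there is no substantive difference.
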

\begin{remark}
By the result of~\cite{BCE80} when $G$ is the $k$-clique we have
$\E[h(G_{1/2})] \leq O \left( \frac{k}{\sqrt{\log(k)}} \right)$,
and so, the bound in \cref{thm:hadwiger} is tight up to a multiplicative constant.
\end{remark}
\section{Preliminaries}\label{sec:prelim}

Let $G = (V,E)$ be an undirected graph.
An independent set in $G$ is a subset of the vertices that spans no edges.
The \emph{independence number} of $G$, denoted by $\alpha(G)$, is the largest size of an independent set in $G$.
A legal \emph{vertex coloring} of $G$ is an assignment of colors to each vertex in $V$ such that no two adjacent vertices
have the same color.
The chromatic number of $G$, denoted by $\chi(G)$, is the smallest number of colors
required to legally color $G$. Note that in any vertex coloring of $G$ each color class forms an
independent set, and hence $\alpha(G) \geq n/\chi(G)$.
\emph{Hadwiger number} of a graph $G$, denoted by $h(G)$, is the maximal $t \in \N$ such that $G$ contains $K_t$ as a minor.

For a subset of the vertices $A \seq V$ let $E(A)$ be the set of edges spanned by $A$,
i.e., $E(A) = \{ (u,v)\in E: u,v \in A \}$.
For two disjoint subsets of the vertices $A,B \seq V$ define $\cut(A,B) = \{(u,v) \in E : u \in A, v \in B\}$
to be the set of edges with one endpoint in $A$ and one endpoint in $B$.

\medskip

We will need the following easy claim saying that the number of edges in
a graph is at least quadratic in its chromatic number.

\begin{claim}\label{claim:chi vs |E|}
  Let $G = (V,E)$ be a graph with chromatic number $\chi(G) = k$.
  Then $|E| \geq {k \choose 2}$.
\end{claim}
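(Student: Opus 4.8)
The plan is to argue that a graph with few edges can be colored with few colors, by a greedy/degeneracy argument, and then take the contrapositive. Concretely, I would first establish that any graph $H$ with $m$ edges has a vertex of degree at most $\sqrt{2m}$ (in fact, since a graph on $r$ vertices has at most $\binom{r}{2}$ edges, if every vertex had degree $\geq \sqrt{2m}$ then $H$ would have more than $m$ edges on the $\geq \sqrt{2m}+1$ vertices of nonzero degree — I'd make this precise). This lets me bound the degeneracy of $G$.

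The main step is then the following: if $|E| < \binom{k}{2}$, I claim $\chi(G) \le k-1$, contradicting $\chi(G)=k$. To see this, I would repeatedly peel off a minimum-degree vertex. As long as the remaining graph has at least $k-1$ vertices with at least one edge... actually the cleanest route is: a graph in which every subgraph has a vertex of degree $\le k-2$ is $(k-1)$-colorable by greedy coloring on the reverse peeling order. So it suffices to show every subgraph $H \subseteq G$ has minimum degree $\le k-2$. Suppose not: some subgraph $H$ has minimum degree $\ge k-1$, hence $H$ has at least $k$ vertices and $|E(H)| \ge \frac{(k-1)\cdot k}{2} = \binom{k}{2} > |E|$, a contradiction since $H \subseteq G$. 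Therefore $\chi(G) \le k-1$, the desired contradiction, so $|E| \ge \binom{k}{2}$.

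I expect the only mild subtlety to be making the greedy-coloring-from-degeneracy statement airtight: order the vertices $v_1, \dots, v_n$ so that $v_i$ has degree at most $k-2$ among $\{v_1,\dots,v_i\}$ (obtained by iteratively removing a minimum-degree vertex and reversing), then color $v_1, v_2, \dots$ in order, each time picking a color in $\{1,\dots,k-1\}$ not used by the at most $k-2$ already-colored neighbors; such a color always exists. This is entirely routine, so there is no real obstacle here — the claim is genuinely easy, as advertised. An alternative one-line argument: in any proper coloring with $k = \chi(G)$ colors, every pair of color classes must have an edge between them (else the two classes could be merged, giving a $(k-1)$-coloring), producing $\binom{k}{2}$ distinct edges; I might use this shorter argument instead, as it avoids degeneracy altogether.
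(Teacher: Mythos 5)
Both of your arguments are correct. Your main route, via degeneracy, is a genuinely different proof from the paper's: you show that $|E| < \binom{k}{2}$ forces every subgraph to have a vertex of degree $\le k-2$, hence $G$ is $(k-2)$-degenerate and therefore $(k-1)$-colorable by greedy coloring along the peeling order, contradicting $\chi(G)=k$. That chain of reasoning is airtight (a subgraph $H$ with minimum degree $\ge k-1$ has at least $k$ vertices and at least $\binom{k}{2}$ edges, more than $G$ itself has). What the degeneracy route buys is a slightly stronger-looking conclusion — it really bounds the degeneracy, not just the edge count — and it generalizes cleanly to statements like ``few edges implies small coloring number.'' What it costs is the need to set up and justify the greedy-on-reverse-peeling machinery, which for a claim this small is overhead. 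Your closing ``alternative one-line argument'' — take an optimal $k$-coloring and observe that every pair of color classes must be joined by at least one edge, else they could be merged — is exactly the proof given in the paper, and is the cleaner choice here since it needs no auxiliary lemma and produces $\binom{k}{2}$ edges directly. Either way the claim is fine; I would lead with the merging argument and drop the degeneracy detour.
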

\begin{proof}
  Let $V = C_1 \cup \dots \cup C_k$ be a partition of
  the vertices of $G$ into $k$ color classes.
  Note that there must be at least one edge between every two color classes,
  as otherwise, if there are no edges between $C_i$ and $C_j$, then
  $C_i \cup C_j$ is an independent set, and we can color them with the same color,
  which implies that $\chi(G) \leq k-1$. Therefore $|E| \geq {k \choose 2}$.
\end{proof}

We will also need a result from extremal set theory about $r$-wise $t$-intersecting families.
In order to explain the result we will need some notation. Let $X$ be a finite set,
and let $P(X) = \{F : F \seq X\}$ be the collection of all subsets of $X$.
A family of sets $\F \seq P(X)$ is said to be $r$-wise $t$-intersecting
if for every $F_1,\dots,F_r \in \F$ it holds that $|\F_1 \cap F_2 \cap \dots \cap F_r | \geq t$.
In the proof of \cref{thm:tiny chi} we will use the following theorem due to Frankl~\cite{Frankl}.
\begin{theorem}[Claim 9.2~\cite{Frankl}]\label{thm:Frankl}
  Let $\F \seq P(X)$ be a $3$-wise $t$-intersecting family.
  Then $|\F| < (\frac{\sqrt{5}-1}{2})^t \cdot 2^n$.
\end{theorem}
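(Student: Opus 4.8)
The plan is to establish the equivalent measure formulation: writing $|X| = n$ and $\phi = \frac{\sqrt{5}-1}{2}$ (so that $\phi^2 = 1-\phi$ and hence $\phi^3 = 2\phi - 1$), it suffices to show $|\F|/2^n < \phi^t$. The first move is to reduce to monotone families. If $F \in \F$ and $F \subseteq F'$, then adjoining $F'$ preserves the $3$-wise $t$-intersecting property: every triple that uses $F'$ (once or repeatedly) has intersection containing the corresponding triple with $F$ substituted for $F'$, which already has size at least $t$. Passing to the up-closure only enlarges $\F$, so we may assume $\F$ is an up-set; then $|\F|/2^n = \mu_{1/2}(\F) := \Pr_{\mathbf{R}}[\mathbf{R} \in \F]$, the probability that a uniform random subset $\mathbf{R} \subseteq X$ lies in $\F$, a quantity monotone under increasing the bias of $\mathbf{R}$.

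The heart of the argument is an induction on $t$ driven by conditioning on a single element $x \in X$. Split $\F$ into $\F_0 = \{F \in \F : x \notin F\}$ and $\F_1 = \{F \setminus \{x\} : F \in \F,\ x \in F\}$, both viewed as up-sets in $P(X \setminus \{x\})$; monotonicity gives $\F_0 \subseteq \F_1$, and $|\F| = |\F_0| + |\F_1|$, i.e.\ $\mu_{1/2}(\F) = \tfrac12\big(\mu(\F_0) + \mu(\F_1)\big)$ with measures now taken over $X \setminus \{x\}$. The hypothesis decouples into three facts: (a) $\F_0$ is still $3$-wise $t$-intersecting, being a subfamily of $\F$; (b) $\F_1$ is $3$-wise $(t-1)$-intersecting, since a triple of its members' intersections can lose only the coordinate $x$; and crucially (c) for all $A, B \in \F_1$ and $C \in \F_0$ we have $|A \cap B \cap C| \ge t$, because $x \notin C$, so deleting $x$ from the first two sets does not disturb the triple intersection with $C$. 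Informally, (c) says $\F_0$ must be $t$-intersecting against every pair drawn from the strictly larger family $\F_1$, which is what forces $\mu(\F_0)$ to be much smaller than $\mu(\F_1)$.

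Feeding (b) into the inductive hypothesis yields $\mu(\F_1) \le \phi^{t-1}$, and the desired bound $\mu_{1/2}(\F) < \phi^t$ reduces to showing $\mu(\F_0) \le 2\phi^t - \phi^{t-1}$. Here the golden-ratio arithmetic enters: $2\phi^t - \phi^{t-1} = \phi^{t-1}(2\phi - 1) = \phi^{t-1}\cdot\phi^3 = \phi^{t+2}$. Thus fact (a) alone ($\mu(\F_0) \le \phi^t$ by induction on the same $t$) is not enough; one must squeeze an extra factor of $\phi^2$ out of the mixed condition (c). I would do this by strengthening the induction hypothesis to a statement about a nested pair (or short chain) of up-sets $\F_0 \subseteq \F_1 \subseteq \cdots$ carrying their pairwise-mixed $3$-wise-intersection constraints, chosen so that this enriched statement is preserved under conditioning on a coordinate and is self-sustaining; the recurrence one then has to verify has characteristic identity $\phi^2 + \phi = 1$, which is precisely the source of the constant $\frac{\sqrt{5}-1}{2}$ (any cruder bookkeeping would give only $\tfrac12$ or $\tfrac{1}{\sqrt{2}}$). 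The small base cases are checked directly; for $t = 1$, $\F$ is an intersecting up-set, so $\mu_{1/2}(\F) \le \tfrac12 < \phi$.

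I expect the main obstacle to be exactly the last point: engineering the right enriched induction invariant — the chain of nested families together with the precise list of mixed $3$-wise-intersection conditions among them — so that (i) it degenerates to the bare statement $|\F| < \phi^t 2^n$ in the configuration $\F_0 = \F_1 = \F$, (ii) it is closed under single-coordinate conditioning, and (iii) the numerical recursion it produces closes at $\phi$ rather than a larger constant. As an alternative I would keep in reserve the classical shifting route: the shift operators preserve $|\F|$ and the $3$-wise $t$-intersecting property, and on a shifted family the minimal sets are pushed to the left, which bounds the effective ground set and reduces the estimate to a finite optimization — but the combinatorial bookkeeping there is of comparable difficulty, and the golden ratio still emerges only after the same kind of two-term recursion.
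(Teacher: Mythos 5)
The paper does not prove this theorem; it is cited as Claim~9.2 of Frankl~\cite{Frankl} and invoked as a black box, so there is no in-paper proof to compare against.

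On the merits of your proposal: the setup is sound. The monotonization step is valid (the up-closure of a $3$-wise $t$-intersecting family is again $3$-wise $t$-intersecting), the coordinate-conditioning decomposition $\mu_{1/2}(\F)=\tfrac12\bigl(\mu(\F_0)+\mu(\F_1)\bigr)$ is right, and facts (a), (b), (c) are all correct, including the mixed condition (c), which is exactly where the extra strength over the naive $\phi^t$ bound must come from. The golden-ratio arithmetic $2\phi^t-\phi^{t-1}=\phi^{t+2}$ is correct and correctly diagnoses the shortfall: one needs $\mu(\F_0)<\phi^{t+2}$, while (a) alone yields only $\mu(\F_0)\le\phi^t$. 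But the proof then stops. You propose to ``strengthen the induction hypothesis to a statement about a nested pair (or short chain) of up-sets'' carrying mixed intersection constraints, and you explicitly flag engineering that invariant as the ``main obstacle.'' That is the proof, and it is not carried out: after conditioning, a pair $(\F_0\subseteq\F_1)$ splits into four families $\F_i^{(j)}$ with a lattice of nested inclusions and a larger web of mixed $3$-wise conditions of differing strengths, and it is not at all clear without doing the work that the resulting system of inequalities is closed and that the recursion converges to $\phi$ rather than collapsing to a weaker constant or failing to close at all. (There is also a small loose end on strictness at the base case $t=0$, where one only gets $\mu\le 1$ rather than $<1$, though this is minor.) What you have written is a plausible and well-motivated plan for a proof, with the central step deferred; as it stands it has a genuine gap, not a verified argument. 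If you want to complete it along these lines you will need to explicitly write down the enriched invariant for a chain of families, verify it is preserved under single-coordinate conditioning, and check the resulting numerical recursion; alternatively you could consult Frankl's actual proof, which proceeds by a different mechanism (a shifting argument reducing to left-compressed families and a random-walk/counting estimate) and is where the golden ratio originally appears.
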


\section{Probability that $\chi(G_{1/2})$ is small}\label{sec:proofs}

Below we prove \cref{lemma:t monochromatic edges missed}, this will immediately imply
the first two items of \cref{thm:tiny chi}.

\begin{proof}[Proof of \cref{lemma:t monochromatic edges missed}]
    For a graph $G = (V,E)$ let $\A = \{\A_1, \dots, \A_d \}$ be a partition/coloring of the vertices of $G$ into $d$ color classes,
    i.e., the $\A_i$'s are pairwise disjoint and $V = \A_1 \cup \dots \cup \A_d$.
    For such a partition $\A$ define $\uncut(\A) = E(\A_1) \cup \dots \cup E(\A_d) \seq E$.
    to be the set of the edges of $G$ with both endpoints in some $\A_i$.
    Denote by $\P_d = \{\A = (\A_1, \dots, \A_d)\}$ the collection of all such partitions
    of the vertices into $d$ color classes.

    Let $G_{1/2} = (V, E_{1/2})$ be a random subgraph of $G$, and
    let $S = E \setminus E_{1/2}$ be a random subset of the edges that are in $G$ but not in $G_{1/2}$.
    Note that $S$ can be sampled by added each edge of $G$ to $S$ independently with probability $1/2$.
    Then
    \begin{equation*}
        \Pr[\chi(G_{1/2}) \leq d] = \Pr[\exists \A \in \P_d: \uncut(\A) \seq S].
    \end{equation*}
    Let $U$ be the monotone closure of $\{\uncut(\A) : \A \in \P_d\}$
    defined as
    \begin{equation}\label{eq:def of U}
        U = \{\uncut(\A) : \A \in \P_d\}^\uparrow = \{ S \seq E : \exists \A \in \P_d \mbox{ such that } \uncut(\A) \seq S  \}.
    \end{equation}
    This implies that
    \[
        \Pr[\chi(H) \leq d] = \Pr[\exists \A: S \suq \uncut(\A)] = \frac{|U|}{2^{|E|}},
    \]
    and hence, it is enough to show an upper bound on the size of $U$.
    In order to upper bound $|U|$ we use the assumption that every $d^3$ coloring of the vertices of $G$
    has at least $t$ monochromatic edges.
    \begin{claim}\label{claim:3-wise t-intersecting}
    Let $U$ be as in \cref{eq:def of U}.
    Then $U$ is a 3-wise $t$-intersecting family.
    \end{claim}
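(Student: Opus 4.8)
The plan is to reduce the claim to the hypothesis about monochromatic edges in $d^3$-colorings by taking a ``product'' of three $d$-colorings. First I would use the fact that $U$ is upward-closed to pass from arbitrary members of $U$ to generators: given arbitrary $S_1, S_2, S_3 \in U$, pick partitions $\A^{(1)}, \A^{(2)}, \A^{(3)} \in \P_d$ with $\uncut(\A^{(j)}) \seq S_j$ for $j = 1,2,3$ (these exist by the definition of $U$ in \cref{eq:def of U}). Then $S_1 \cap S_2 \cap S_3 \suq \uncut(\A^{(1)}) \cap \uncut(\A^{(2)}) \cap \uncut(\A^{(3)})$, so it suffices to show that this triple intersection has size at least $t$.

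Next I would define the product coloring $c : V \to \{1,\dots,d\}^3$ by $c(v) = (i_1(v), i_2(v), i_3(v))$, where $i_j(v)$ is the index of the class of the partition $\A^{(j)}$ that contains $v$. This is a coloring of $V$ with at most $d^3$ colors, so by the hypothesis of the lemma it has at least $t$ monochromatic edges. The key observation is the identity: an edge $(u,v) \in E$ is monochromatic under $c$ if and only if $u$ and $v$ belong to a common class in each of the three partitions, i.e.\ $(u,v) \in E(\A^{(j)}_{i_j(u)})$ for $j = 1,2,3$, which is exactly the statement that $(u,v) \in \uncut(\A^{(j)})$ for all three $j$. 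Hence the set of $c$-monochromatic edges equals $\uncut(\A^{(1)}) \cap \uncut(\A^{(2)}) \cap \uncut(\A^{(3)})$, and therefore this set has at least $t$ elements. Combining with the first step gives $|S_1 \cap S_2 \cap S_3| \geq t$, which is the $3$-wise $t$-intersecting property.

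I do not expect a real obstacle: the entire content is the correspondence between intersecting $\uncut$-sets and refining colorings via products, and the exponent $3$ in ``$d^3$ colors'' is precisely what is needed to host the threefold product of $d$-colorings. The only minor point to handle carefully is that a ``coloring with $d^3$ colors'' need not be surjective; since the hypothesis is stated for \emph{every} coloring (equivalently, every map $V \to \{1,\dots,d^3\}$), composing $c$ with any injection $\{1,\dots,d\}^3 \hookrightarrow \{1,\dots,d^3\}$ is harmless and does not change which edges are monochromatic. This completes the proof of the claim, and together with \cref{thm:Frankl} it gives $|U| < \left(\frac{\sqrt{5}-1}{2}\right)^t 2^{|E|}$, hence the bound on $\Pr[\chi(G_{1/2}) \leq d]$.
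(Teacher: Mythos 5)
Your proof is correct and is essentially the same argument as the paper's: the paper reduces to the generators via the monotone closure, forms the common refinement $\I = \{\A_i \cap \B_j \cap \C_k\}$ (your product coloring $c$) with at most $d^3$ parts, and observes that an edge monochromatic in $\I$ lies in all three $\uncut$ sets. The only cosmetic difference is that you state this last step as an equivalence between monochromatic edges and the triple intersection, while the paper only uses the one containment $E(\A_i \cap \B_j \cap \C_k) \subseteq E(\A_i) \cap E(\B_j) \cap E(\C_k)$, which is all that is needed.
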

    \begin{proof}
        Note that since $U$ is a monotone closure of $\{\uncut(\A) : \A \in \P_d\}$
        it is enough to show that $\{\uncut(\A) : \A \in \P_d\}$ is a 3-wise $t$-intersecting family.
        Let $\A = \{\A_1, \dots, \A_d \}$, $\B = \{\B_1, \dots, \B_d \}$, and $\C = \{\C_1, \dots, \C_d\}$
        be three partitions of the vertices $\A,\B,\C \in \P_d$.
        We claim that $|\uncut(\A) \cap \uncut(\B) \cap \uncut(\C)| \geq t$.
        Indeed, let
        \[
            \I = \{\A_i \cap \B_j \cap C_k : 1 \leq i,j,k \leq d, \A_i \cap \B_j \cap C_k \neq \emptyset\}
        \]
        be a partition of the vertices into at most $d^3$ color classes.
        By the assumption that every $d^3$ coloring of the vertices of $G$
        has at least $t$ monochromatic edges it follows that there are at least $t$ edges
        $e \in E$ that are contained in some color class of $\I$,
        i.e., $e \in E(\A_i \cap \B_j \cap C_k)$ for some $1 \leq i,j,k \leq d$.
        By the containment $E(\A_i \cap \B_j \cap C_k) \seq E(\A_i) \cap E(\B_j) \cap E(C_k)$
        this implies that there are least $t$ edges $e \in E$ that belong to
        $\uncut(\A) \cap \uncut(\B) \cap \uncut(\C)$, and the claim follows.
    \end{proof}
        By applying \cref{thm:Frankl} we conclude that $|U| \leq (\frac{\sqrt{5}-1}{2})^t \cdot 2^{|E|}$,
        and hence
        \[
            \Pr[\chi(H) \leq d] = \Pr[\exists \A: S \suq \uncut(\A)] = \frac{|U|}{2^{|E|}} \leq (\frac{\sqrt{5}-1}{2})^t,
        \]
        as required.
\end{proof}

We are now ready to prove the first two items of \cref{thm:tiny chi}
\begin{proof}[Proof of \cref{thm:tiny chi} \cref{item:bipartite}]
    Let $G = (V,E)$ be a graph with $\chi(G) = k$,
    and let $V_1, \dots, V_8$ be partition $V$ into 8 color classes.
    Denote by $m_i$ the number of edges spanned by $V_i$, and denote by $k_i$ the chromatic number of $G[V_i]$.
    Then, $\sum_{i=1}^8 k_i \geq k$, and hence for some $1 \leq i^* \leq 8$ we must have $k_{i^*} \geq k/8$.
    On the other hand, by \cref{claim:chi vs |E|} we have
    $m_{i^*} \geq {k_{i^*} \choose 2} \geq {k/8 \choose 2} \geq \Omega(k^2)$.
    Therefore, any $8$-coloring of $G$ has at least $\Omega(k^2)$ monochromatic edges,
    and thus, by \cref{lemma:t monochromatic edges missed} it follows that
    $\Pr[\text{$G_{1/2}$ is bipartite}] = \Pr[\chi(G_{1/2}) \leq 2] < 2^{-\Omega(k^2)}$.
\end{proof}

\begin{proof}[Proof of \cref{thm:tiny chi} \cref{item:chi leq d}]
    Let $G = (V,E)$ be a graph with $\chi(G) = k$,
    and let $V_1, \dots, V_{d^3}$ be partition $V$ into $d^3$ color classes.
    Denote by $m_i$ the number of edges spanned by $V_i$, and denote by $k_i$ the chromatic number of $G[V_i]$.
    Then, $\sum_{i=1}^{d^3} k_i \geq k$, and hence for some $1 \leq i^* \leq d^3$ we must have $k_{i^*} \geq k/d^3$.
    This implies
    \[
         \sum_{i=1}^{d^3} m_i
         \stackrel{(*)}{\geq} \sum_{i=1}^{d^3}  {k_i \choose 2}
         \stackrel{(**)}{\geq} d^3 {\frac{1}{d^3}\sum_i k_i \choose 2}
         \geq d^3 {\frac{k}{d^3} \choose 2}
         = \frac{k(k-d^3)}{2d^3},
    \]
    where (*) is by \cref{claim:chi vs |E|},
    and (**) is by Jensen's inequality,
    using the fact that $x \mapsto \frac{x\cdot(x-1)}{2}$ is a convex function.
    Therefore, any $d^3$-coloring of $G$ has at least $\frac{k(k-d^3)}{2d^3}$ monochromatic edges,
    and thus, by \cref{lemma:t monochromatic edges missed} it follows that
    $\Pr[\chi(G_{1/2}) \leq d] < 2^{-\Omega(\frac{k(k-d^3)}{2d^3})}$.
\end{proof}

\begin{proof}[Proof of \cref{thm:tiny chi} \cref{item:martingale}]
We start with the following easy observation.
\begin{observation}\label{obs:sqrt(n)}
    If $\chi(G) = k$, then $\E[\chi(G)_{1/2}] \geq \sqrt{k}$.
\end{observation}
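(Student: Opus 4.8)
The plan is to argue by contradiction via an averaging argument over the colorings of $G_{1/2}$. Suppose towards a contradiction that $\E[\chi(G_{1/2})] < \sqrt{k}$. For every outcome of $G_{1/2}$, fix a legal coloring of $G_{1/2}$ using exactly $\chi(G_{1/2})$ colors; this partitions $V$ into $\chi(G_{1/2})$ classes, each of which is independent in $G_{1/2}$. The key point is that an independent set $A$ in $G_{1/2}$ need not be independent in $G$, but the edges of $G$ lost inside $A$ (namely $E(A)$ in the original graph) are all absent from $G_{1/2}$. I would like to convert such a coloring of $G_{1/2}$ into a small-entropy "description" that, in expectation, encodes a legal coloring of $G$ with few colors, contradicting $\chi(G)=k$.

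Concretely, here is the approach I expect to work. Take the fixed $\chi(G_{1/2})$-coloring of $G_{1/2}$; within each color class $A_i$ (independent in $G_{1/2}$), the induced subgraph $G[A_i]$ of the original graph is a subgraph of $G$ all of whose edges were deleted. Recolor each $G[A_i]$ legally using $\chi(G[A_i])$ colors; concatenating with the index $i$ gives a legal coloring of $G$ with $\sum_i \chi(G[A_i])$ colors, so $\sum_i \chi(G[A_i]) \geq k$ always. Thus for every outcome there is a color class $A_{i}$ with $\chi(G[A_i]) \geq k/\chi(G_{1/2})$. By \cref{claim:chi vs |E|}, $G[A_i]$ has at least $\binom{k/\chi(G_{1/2})}{2}$ edges, all of which lie in $E \setminus E_{1/2}$. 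Hence the number of edges of $G$ deleted in forming $G_{1/2}$ is at least $\binom{\chi(G)/\chi(G_{1/2})}{2}$ in every outcome. Taking expectations, $\E\big[|E \setminus E_{1/2}|\big] = |E|/2$ on the left, while the right side is $\E\big[\binom{k/\chi(G_{1/2})}{2}\big] \geq \binom{k/\E[\chi(G_{1/2})]}{2}$ by convexity (Jensen, since $x \mapsto \binom{k/x}{2}$ is convex in $x$ for $x>0$) — this already gives a bound, but I need to compare it against $|E|/2$, which I do not control directly.

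The cleaner route, which I would actually pursue, avoids needing $|E|$: run the same argument recursively. Let $f(k) = \min_G \E[\chi(G_{1/2})]$ over graphs with $\chi(G)=k$. From every outcome, pick the heavy class $A_{i}$ with $\chi(G[A_i]) \geq k/\chi(G_{1/2})$; then $\chi(G_{1/2})$ restricted to that class is $1$ (it is independent in $G_{1/2}$), but that is about $G_{1/2}$, not about an independent fresh sample — so instead I apply the observation to $G[A_i]$ itself. Actually the self-contained version is: in any outcome, $\chi(G_{1/2}) \cdot \max_i \chi(G[A_i]) \geq \sum_i \chi(G[A_i]) \geq k$, and separately each $G[A_i]$ is edge-deleted, so $G[A_{i}]$ with $\chi \geq k/\chi(G_{1/2})$ sits inside the deleted edges; applying \cref{claim:chi vs |E|} and then Jensen as above yields $|E|/2 \geq \binom{k/\E[\chi(G_{1/2})]}{2}$, and since trivially $|E| \leq \binom{n}{2}$ does not help, I instead observe $|E| \geq \binom{k}{2}$ by \cref{claim:chi vs |E|} is the wrong direction too. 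The resolution I expect is the following exact identity: letting $Y = \chi(G_{1/2})$, in every outcome $G$ decomposes as a disjoint union (over the $Y$ classes) of edge-deleted copies with chromatic numbers summing to $\geq k$, so $|E \setminus E_{1/2}| \geq \sum_i \binom{\chi(G[A_i])}{2} \geq Y \binom{k/Y}{2} = \tfrac{k(k-Y)}{2Y} \geq \tfrac{k(k-Y)}{2Y}$; meanwhile $|E| \geq \binom{k}{2} = \tfrac{k(k-1)}{2}$ so $|E \setminus E_{1/2}| \le |E|$ is automatic. Taking expectations: $|E|/2 \geq \E\big[\tfrac{k(k-Y)}{2Y}\big] = \tfrac{k}{2}\big(k\,\E[1/Y] - 1\big) \geq \tfrac{k}{2}\big(k/\E[Y] - 1\big)$. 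Combined with $|E| \leq$ (total edges of $G$) this is still not immediately $\sqrt{k}$, so the main obstacle — and the step I'd spend the most care on — is getting the constant right: I believe the intended argument pairs $|E|/2 \geq \tfrac{k(k-\E[Y])}{2\E[Y]}$ with $|E| \le \binom{n}{2}$ replaced by the sharper per-class bound to force $\E[Y]^2 \gtrsim k(k-\E[Y])/|E| \cdot (\text{something})$; the slickest finish is simply: in each outcome, $|E\setminus E_{1/2}| \ge \binom{k/Y}{2}$ and $|E_{1/2}| \ge \binom{k/Y}{2}$ as well (since $E_{1/2}$ also restricts to an edge set on the heavy class whose chromatic number... ) — and then $\binom{n}{2} \ge |E| = |E_{1/2}| + |E\setminus E_{1/2}|$, but the clean conclusion is that since the heavy class has $\ge \binom{k/Y}{2}$ edges in $G$ and these are split, we get $Y \ge \sqrt{k}$ directly in expectation by Jensen once we note $|E\setminus E_{1/2}|$ and $|E_{1/2}|$ each have the same distribution. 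I would present this last pairing carefully as the crux.
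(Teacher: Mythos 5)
Your proposal does not close the argument. You correctly identify the two key raw ingredients — that any $\chi(G_{1/2})$-coloring of $G_{1/2}$ partitions $V$ into classes $A_1,\dots,A_Y$ with $\sum_i \chi(G[A_i]) \geq k$, and that $E_{1/2}$ and $E\setminus E_{1/2}$ have the same distribution — but you never assemble them. Instead of using these two facts directly, you repeatedly detour through edge counting (\cref{claim:chi vs |E|} plus Jensen), which produces inequalities of the form $|E|/2 \geq \binom{k/\E[Y]}{2}$ that, as you yourself note, you cannot convert into a bound on $\E[Y]$ because you have no control on $|E|$ from above. Your final ``slickest finish'' also contains a false claim: you say the heavy class's $\binom{k/Y}{2}$ edges ``are split'' between $E_{1/2}$ and $E\setminus E_{1/2}$, but in fact \emph{all} of them lie in $E\setminus E_{1/2}$ — that class is independent in $G_{1/2}$ by construction — so there is no splitting, and no Jensen application of the kind you gesture at.

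The step you are missing is much simpler than the edge-counting machinery you reach for. Once you have $\sum_i \chi(G[A_i]) \geq k$, observe that each $G[A_i]$ is a subgraph of the deleted graph $\ov{H} := (V, E\setminus E_{1/2})$, so $\max_i \chi(G[A_i]) \leq \chi(\ov{H})$, and hence $\chi(G_{1/2})\cdot\chi(\ov{H}) \geq Y\cdot\max_i\chi(G[A_i]) \geq \sum_i\chi(G[A_i]) \geq k$. That is, $\chi(H)\cdot\chi(\ov{H}) \geq k$ for $H = G_{1/2}$. Now AM--GM gives $\tfrac{1}{2}(\chi(H)+\chi(\ov{H})) \geq \sqrt{k}$ pointwise, and taking expectations using $\E[\chi(H)] = \E[\chi(\ov{H})] = \E[\chi(G_{1/2})]$ finishes it — no appeal to $|E|$, to \cref{claim:chi vs |E|}, or to Jensen on edge counts is needed. (The paper derives $\chi(H)\cdot\chi(\ov{H}) \geq \chi(G)$ even more directly, via the product coloring $v \mapsto (c_H(v), c_{\ov H}(v))$, but your decomposition route gives the same inequality once you replace the edge-count estimate of $\max_i \chi(G[A_i])$ with the trivial bound $\chi(\ov{H})$.)
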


\begin{proof}
    Let $G = (V,E)$.
    Let $H = (V,E_H)$ be a subgraph of $G$ and let $\ov{H} = (V,E \setminus E_H)$ be the complement of $H$ in $G$.
    Note that $\chi(H) \cdot \chi(\ov{H}) \geq k$.
    Indeed, if $c_H:V \to [\chi(H)]$ is a coloring of $H$ and
    $c_{\ov H}:V \to [\chi(\ov H)]$ is a coloring of $\ov H$,
    then we can construct a coloring $c_G$ of $G$
    with at most $\chi(H) \cdot \chi(\ov{H})$ colors
    by letting $c_G(v) = (c_H(v),c_{\ov H}(v))$.
    To see that $c_G$ is indeed a legal coloring note that
    every edge $(u,v)$ in $G$ belongs to either $H$ or $\ov H$,
    and hence $c_G(u)$ differs from $c_G(v)$ is at least one of the coordinates.
    This implies that
    $\frac{\chi(H) + \chi(\ov{H})}{2} \geq \sqrt{\chi(H) \cdot \chi(\ov{H})} \geq \sqrt{k}$
    for all subgraphs $H \seq G$.
    Taking expectation of the inequality above, and recalling that
    each of $H$ and $\ov H$ are distributed like $G_{1/2}$
    we get that
    \[
        \E[\chi(G_{1/2})] = \frac{\E[\chi(H)] + \E[\chi(\ov{H})]}{2}
        \geq \sqrt{k}. \qedhere
    \]
\end{proof}
Next, we claim that $\chi(G_{1/2})$ is concentrated around its expectation.

\begin{lemma}\label{lemma:martingale}
    Let $G = (V,E)$ be a graph with $\chi(G) = k$. Then
    for all $0 < \eps < 1/2$ we have $\Pr[\chi(G_{1/2}) < (1- \eps) \cdot \E[\chi(G_{1/2})]]
    < e^{-\Omega(\eps^2 \cdot \E[\chi(G_{1/2})])}$.
\end{lemma}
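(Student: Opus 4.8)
The plan is to prove concentration of $\chi(G_{1/2})$ via the Azuma–Hoeffding inequality applied to an edge-exposure (or rather, a vertex-exposure) martingale, combined with the lower bound $\E[\chi(G_{1/2})] \ge \sqrt{k}$ from Observation~\ref{obs:sqrt(n)} to convert the additive deviation into the claimed multiplicative/exponential form. The subtlety is that the natural edge-exposure martingale has $|E|$ steps with bounded differences $1$, which only gives concentration on the scale $\sqrt{|E|}$ — far too weak, since $|E|$ can be enormous compared to $k$. So the right move is a \emph{vertex-exposure} martingale, which has only $n$ steps, but $n$ can also be much larger than $k$; hence even that is insufficient for a clean bound in terms of $k$ alone. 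The actual key observation must be that what controls the fluctuation is not the number of exposure steps but how much a single edge (or vertex) can change the chromatic number, together with a smarter deviation inequality (Talagrand-type, or McDiarmid's bounded-differences with a "certificate"-style improvement), since adding one edge changes $\chi$ by at most $1$ and, crucially, if $\chi(G_{1/2}) \le s$ then there is a \emph{certificate} consisting of only $\binom{s}{2}$-ish witnessing edges.

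\medskip\noindent
Concretely, here is the route I would take. First I would set up the edge-exposure martingale $Z_i = \E[\chi(G_{1/2}) \mid \text{first } i \text{ edge-coins}]$, noting $|Z_i - Z_{i-1}| \le 1$ always. To get a bound that does not degrade with $|E|$, I would invoke Talagrand's inequality in the form tailored to functions that are $1$-Lipschitz and $f$-certifiable: the event $\chi(G_{1/2}) \le s$ is certified by the presence/absence of at most $\binom{s}{2}$ edges (pick one monochromatic-forbidding edge between each pair of color classes in an optimal coloring of $G_{1/2}$ — wait, more carefully: a legal $s$-coloring of $G_{1/2}$ is certified by revealing, for $G$'s edges, enough information; the cleanest certificate that $\chi(G_{1/2}) \le s$ is just the full coloring, whose "cost" is the number of edges whose removal it relies on — but by Claim~\ref{claim:chi vs |E|}-type reasoning the relevant witnessing set has size $O(s^2)$). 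This gives, by the standard Talagrand argument (as in the Alon–Spencer treatment of $\chi(G(n,1/2))$ concentration), that $\chi(G_{1/2})$ is concentrated within $O(\sqrt{s \log s})$ or so of its median when conditioned on $\chi \le s$; iterating/combining the upper and lower tails yields $\Pr[\chi(G_{1/2}) < \E[\chi(G_{1/2})] - \lambda] \le \exp(-\Omega(\lambda^2 / \E[\chi(G_{1/2})]))$. Then I would plug in $\lambda = \eps \cdot \E[\chi(G_{1/2})]$, so the exponent becomes $\Omega(\eps^2 \cdot \E[\chi(G_{1/2})]^2 / \E[\chi(G_{1/2})]) = \Omega(\eps^2 \E[\chi(G_{1/2})])$, exactly the claimed bound. (And combined with $\E[\chi(G_{1/2})] \ge \sqrt{k}$, this immediately gives item~\ref{item:martingale} of \cref{thm:tiny chi}.)

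\medskip\noindent
\textbf{The main obstacle} I anticipate is the certificate-size / Talagrand bookkeeping: one must argue that conditioned on $\chi(G_{1/2})$ being around some value $s$, a deviation downward by $\lambda$ requires a "witness" (a set of edge-outcomes forcing $\chi \le s - \lambda$) whose size scales like $s^2$ rather than $|E|$, and that the Lipschitz constant in the relevant direction is controlled. This is precisely the delicate step in the classical proof that $\chi(G(n,1/2))$ is concentrated on an interval of length $O(\sqrt{n}/\log n)$, and here we need the analogous statement with $n$ replaced by (a function of) $\chi(G)$, \emph{uniformly over all host graphs $G$} — which is the genuinely new content. An alternative, perhaps more robust, approach that sidesteps sharp certificate counting: use the two-point concentration/median argument only to get $|\chi(G_{1/2}) - \mathrm{Med}| \le O(\sqrt{\E[\chi] \log \E[\chi]})$ from a \emph{vertex}-exposure martingale combined with the observation that exposing one vertex changes $\chi$ by at most $1$ and that only $O(\E[\chi])$ "active" vertices matter, but making "only $O(\E[\chi])$ vertices matter" rigorous again reduces to a certificate argument. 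I would present the Talagrand-based version as the main line, flag the certificate lemma as the crux, and prove that lemma by exhibiting, for any outcome with $\chi(G_{1/2}) = s$, an explicit edge set of size $\binom{s}{2}$ whose status certifies a legal $s$-coloring.
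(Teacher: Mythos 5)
The paper's proof hinges on a single structural observation that you missed: use a \emph{color-class-exposure} martingale rather than an edge- or vertex-exposure one. Fix a proper $k$-coloring $V = C_1 \cup \dots \cup C_k$ of the \emph{host} graph $G$, and define $X_i = \E\bigl[\chi(G_{1/2}) \mid G_{1/2}[C_1 \cup \dots \cup C_i]\bigr]$. Going from step $i$ to $i+1$ reveals only the random coins on edges between $C_{i+1}$ and the previously exposed classes (there are no edges inside $C_{i+1}$). Since $C_{i+1}$ is independent in $G$, and hence in $G_{1/2}$, one can always repair any coloring by assigning $C_{i+1}$ one fresh color; so $|X_{i+1}-X_i| \le 1$. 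This gives a Doob martingale with exactly $k$ steps and unit bounded differences, independently of $n$ and $|E|$. The paper then applies a multiplicative Azuma-type inequality of Alon et al.\ (\cref{prop:AGGL}) to get $\Pr\bigl[\chi(G_{1/2}) < (1-\eps)\E[\chi(G_{1/2})]\bigr] < e^{-\Omega(\eps^2 \E[\chi(G_{1/2})])}$, and combining with $\E[\chi(G_{1/2})] \ge \sqrt{k}$ yields \cref{item:martingale}.

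Your Talagrand-based route has a genuine gap at exactly the step you flag as the crux. The event $\{\chi(G_{1/2}) \le s\}$ is \emph{not} certifiable by $O(s^2)$ edge coordinates: to certify that a particular $s$-coloring is proper in $G_{1/2}$ you must know that \emph{every} edge of $G$ that this coloring makes monochromatic was deleted, and by the counting in \cref{prop:union bound argument} there are always at least $\Omega(k^2/s)$ such edges, not $O(s^2)$. Conversely, to certify the complementary event $\{\chi(G_{1/2}) \ge s\}$ (the direction Talagrand actually uses), you would need a small witness forcing large chromatic number, and the smallest $s$-critical subgraph of $G_{1/2}$ can have as many as $n$ vertices. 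So neither direction has a certificate of size depending only on $\chi(G)$, and the Talagrand machinery, as you set it up, still leaves a dependence on $n$ or $|E|$ — precisely the dependence you correctly identified as unacceptable. You should replace the certificate discussion with the color-class-exposure martingale; that observation is the actual content of the lemma, and the rest is a routine application of a multiplicative Azuma inequality.
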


\begin{proof}
    Let $V = C_1 \cup \dots \cup C_k$ be a partition of
    the vertices of $G$ into $k$ color classes, i.e., each $C_i$ is an independent set in $G$.
    Let $X_0, X_1, \dots, X_k$ be a sequence of random variables
    defined by
    \[
        X_i = \E \left[ \chi(G_{1/2}) \bigg| G_{1/2}[\cup_{j=1}^i C_j] \right].
    \]
    In words, the random variable $X_i$ samples random edged induced by $C_1 \cup \dots \cup C_i$,
    and then takes the expected chromatic number of the random subgraph $G_{1/2}$
    given the random edges chosen from $C_1 \cup \dots \cup C_i$ that have been already exposed.
    In particular $X_0 = \E[\chi(G)_{1/2})]$ and $X_k = \chi(G)_{1/2}$.
    Note that $X_{i+1} - X_i \in \{0,1\}$ since each $C_i$ is an independent set in $G$.

    We now use the following large deviation result due to Alon~et~al.~\cite{AGGL},
    which may be thought of as a multiplicative version of Azuma's inequality.
    \begin{proposition}\label{prop:AGGL}
    Let $X_1, \dots, X_k$ sequence of random variables adapted to some filter $(\F_i)$
    such that $|X_{i+1} - X_i| \leq 1$ for all $i=1,\dots, k-1$.
    Then for any $0 <\eps < 1/2$ it holds that
    \[
        \Pr \left[ \left| \frac{X_k}{\E[X_k]} - 1 \right| > \eps \right]
            < O(e^{-\Omega(\eps^2 \E[X_k])}) .
    \]
    \end{proposition}
    \medskip
    \noindent
    Therefore, using \cref{prop:AGGL} we have
    \[
        \Pr[\chi(G_{1/2}) < (1- \eps) \cdot \E[\chi(G_{1/2})]]
            = \Pr \left[ \frac{X_k}{\E[X_k]} < 1-\eps \right]
            < O(e^{-\Omega(\eps^2 \E[X_k])}).
    \]
\end{proof}
\cref{item:martingale} of \cref{thm:tiny chi} follows immediately from \cref{obs:sqrt(n)} and \cref{lemma:martingale}.
Indeed, by \cref{obs:sqrt(n)} we have $\E[\chi(G_{1/2}] \geq \sqrt{k}$, and by applying \cref{lemma:martingale}
we get that for all $0 < \eps < 1/2$ it holds that
\[
    \Pr[\chi(G_{1/2}) < (1- \eps) \cdot \sqrt{k}] \leq
    \Pr[\chi(G_{1/2}) < (1- \eps) \cdot \E[\chi(G_{1/2}]]
    <  e^{-\Omega(\eps^2 \cdot \E[\chi(G_{1/2}])}
    \leq e^{-\Omega(\eps^2 \cdot \sqrt{k})},
\]
as required.
\end{proof}

\section{Proofs of \cref{thm:alpha bound} and \cref{cor:subgraph col}}

Below we prove \cref{thm:alpha bound}. We remark that a similar idea has appeared in~\cite{BRS}.

\begin{proof}[Proof of \cref{thm:alpha bound}] 
    In our setting we have $\alpha(G) \leq \frac{C}{k} n$, and hence, every set
    of $\ell > 2\alpha(G)$ vertices spans at least
    $\frac{\ell(\ell/\alpha(G)-1)}{2} > \frac{\ell^2}{4\alpha(G)}$ edges.
    In particular, for $\ell = n/d$ we get that
    every set of $\ell$ vertices spans at least
    $\frac{\ell^2}{4\alpha(G)} \geq \frac{k \ell}{4 C d}$ edges.

    Let $S \seq V$ be a subset of the vertices of size $|S| = \ell = n/d$.
    Then, the probability that $S$ is an independent set in $G_{1/2}$
    is at most $(1-p)^{|E[S]|} \leq (1-p)^{\frac{k \ell}{4 C d}}$.
    Therefore, by taking union bound over all subsets of size $\ell$
    we get that
    \[
        \Pr[\chi(G_{1/2}) \leq d] \leq \Pr[\alpha(G_{1/2}) \geq \ell]
            \leq {n \choose \ell} \cdot (1-p)^{\frac{k \ell}{4 C d}}
            \leq \left( ed \right)^\ell \cdot e^{-\frac{p k \ell}{4 C d}}
            = e^{- (\frac{pk}{4 C d} - \ln(d)-1) \cdot \ell}.
    \]
    Note that if $d \leq \frac{pk}{16 C \ln(pk)}$, then $\frac{pk}{8 C d} > 2 \ln(pk) > \ln(d) + 1$,
    and hence $\frac{pk}{16 C d} -\ln(d) - 1 > \frac{pk}{8 C d}$.
    Therefore, the probability above is upper bounded by
    $2^{- \frac{k\ell}{8 C d}} = 2^{-\frac{p k n}{8 C d^2}}$,
    as required.

    \medskip\noindent
    In particular, by letting $d = \frac{pk}{16C \ln(pk)}$ we get
    \[
    \E[\chi(G_{1/2})] \geq d \Pr[\chi(G_{1/2}) \geq d]
    \geq d \cdot(1 - 2^{-\frac{p k n}{8 C d^2}}) = d \cdot (1 - 2^{-\frac{2 \ln(k) n}{d}}) \geq d/2 =  \frac{pk}{32C \log(pk)},
    \]
    and the theorem follows.

\end{proof}

Next we turn to proving \cref{cor:subgraph col}.
\begin{proof}[Proof of \cref{cor:subgraph col}]
    Let $G'$ be the subgraph as in the assumption.
    By \cref{thm:alpha bound} we have
    \[
        \E[\chi(G_{1/2})] > \E[\chi(G'_{1/2})] > \frac{k}{8C \log(k)},
    \]
    as required.
\end{proof}

\section{Proof of \cref{thm:hadwiger}}
The theorem follows almost immediately from the following result of Kostochka~\cite{Kostochka84}.
  \begin{theorem}[\cite{Kostochka84} Theorem 1]\label{thm:kostochka}
    Let $G = (V,E)$ be a graph such that $|E| \geq k \cdot |V|$.
    Then $h(G) \geq \Omega \left( \frac{k}{\sqrt{\log(k)}} \right)$.
  \end{theorem}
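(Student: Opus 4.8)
The plan is to work directly from the edge-density hypothesis: first pass to a minor of $G$ that behaves like a dense pseudorandom graph, and then build the $K_t$ minor inside it by a probabilistic choice of branch sets.

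First I would reduce to a graph of large minimum degree. If $|E(G)| \geq k\,|V(G)|$, repeatedly delete from the current graph any vertex whose current degree is less than $k$. Each deletion destroys fewer than $k$ edges, so if the process deleted every vertex it would have destroyed fewer than $k\,|V(G)| \leq |E(G)|$ edges, which is impossible. Hence the process terminates at a nonempty subgraph $G_1$ with $\delta(G_1) \geq k$, and since $h(\cdot)$ is monotone under taking subgraphs it suffices to prove $h(G_1) = \Omega(k/\sqrt{\log k})$. Next I would pass to a minor: among all minors $H$ of $G_1$ with $\delta(H) \geq k$, fix one with the fewest vertices, and write $n = |V(H)|$. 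Minor-minimality is the key structural leverage: contracting an edge $uv$ produces a vertex of degree $d_H(u) + d_H(v) - \mathrm{codeg}_H(u,v) - 2$, so for the minimum degree to drop below $k$ upon contracting \emph{every} edge, each edge $uv$ of $H$ must either have $\mathrm{codeg}_H(u,v) \geq k-1$ or have a common neighbour of degree exactly $k$. In the first case the closed neighbourhood of an endpoint already induces an almost-complete graph on $\geq k$ vertices, giving a $K_{\Omega(k)}$ subgraph (hence minor) outright; so the substantive case is that $H$ has uniformly small codegrees, no small dense spots, and --- by pushing this local analysis a little further --- mild vertex-expansion, i.e.\ $H$ looks like a sparse pseudorandom graph of minimum degree $\geq k$.

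In that pseudorandom case I would build the minor probabilistically. Set $t := \lceil c\,k/\sqrt{\log k}\rceil$ and $s := \lfloor n/t \rfloor$, take a uniformly random partition of $V(H)$ into $t$ blocks $B_1,\dots,B_t$ of size $s$, and refine each block to a connected subgraph by discarding a few vertices (this is where expansion of $H$ is used; alternatively one first grows $t$ disjoint connected sets greedily, which is possible because removing fewer than $k$ vertices keeps the minimum degree positive, and then inflates them randomly). For a fixed pair $i \neq j$, pseudorandomness of $H$ makes the probability that no edge joins $B_i$ to $B_j$ at most about $(1-\rho)^{s^2} \leq \exp(-\Omega(\rho s^2))$, where $\rho \gtrsim k/n$ is the edge density of $H$; since $\rho s^2 \asymp (k/n)(n/t)^2 = kn/t^2 \gtrsim \log k$, this probability is $k^{-\Omega(1)}$, and a union bound over the $\binom{t}{2} < k^2$ pairs shows that with positive probability every pair of blocks is joined. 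Then $B_1,\dots,B_t$ are the branch sets of a $K_t$ minor, so $h(G) \geq h(G_1) \geq t = \Omega(k/\sqrt{\log k})$. The exponent $\tfrac12$ on $\log k$ is exactly what this union bound buys: balancing ``$\binom{t}{2}$ pairs, each failing with probability $e^{-\Omega(\rho s^2)}$'' against $ts \leq n$ and $\rho n \asymp k$ forces $t \asymp k/\sqrt{\log k}$ and no better.

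The main obstacle is the passage from ``minor-minimal of large minimum degree'' to ``pseudorandom enough for the random partition to succeed''. The balanced random partition genuinely fails on graphs with sparse bottlenecks --- a long path of cliques, or a high-girth sparse expander on very many vertices --- where two random blocks typically lie far apart and span no edge; the content of Kostochka's theorem is that such configurations can either be contracted down (while preserving minimum degree $\geq k$, controlling the codegree and degree-$k$-vertex obstructions noted above) until they expose a clique of the required order, or are already pseudorandom at the relevant scale. Making this dichotomy uniform --- and, if one wants the sharp constant in $\Omega(\cdot)$, quantifying the pseudorandomness precisely --- is the technical heart of the argument; the minimum-degree reduction and the probabilistic branch-set construction around it are routine.
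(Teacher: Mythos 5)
The paper does not prove this statement: it is quoted verbatim from Kostochka~\cite{Kostochka84} and invoked as a black box in the proof of \cref{thm:hadwiger}, so there is no in-paper argument to compare against. Taking your sketch on its own terms: the reduction to minimum degree $\geq k$ by peeling low-degree vertices, the passage to a minor-minimal $H$ with $\delta(H)\geq k$, the per-edge consequence of minor-minimality (for every edge $uv$, either $\mathrm{codeg}_H(u,v)\geq k-1$ or some common neighbour has degree exactly $k$), and the random-partition construction whose union bound forces $kn/t^2 \gtrsim \log k$ and hence $t\asymp k/\sqrt{\log k}$ --- this is the right scaffolding, and you correctly locate the source of the $\sqrt{\log k}$ loss.

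But the gap you flag at the end is not a technicality to be cleaned up; it is the theorem. Two concrete holes in the dichotomy as you state it. First, a single edge $uv$ with $\mathrm{codeg}_H(u,v)\geq k-1$ does not give a $K_{\Omega(k)}$ ``outright'': it says $u$ and $v$ have $\geq k-1$ common neighbours, but nothing forces those common neighbours to be adjacent to one another, so the closed neighbourhood of $u$ need not induce anything near a clique. To get a dense spot you would need high codegree across essentially all pairs inside some neighbourhood, which the per-edge condition does not supply. Second, in the complementary regime, ``no small dense spots'' is much weaker than what the random partition needs: you need both connectivity of each random block and an edge between every pair of blocks, and both require quantitative vertex- and edge-expansion of $H$, i.e.\ genuine quasirandomness between arbitrary balanced vertex sets. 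There is no short route from the local consequences of minor-minimality to that global structure. Kostochka's actual proof closes this gap with a global extremal argument over graphs with no $K_t$ minor --- iterated contractions together with careful edge counting on the resulting minors --- rather than by upgrading a local codegree condition to pseudorandomness. So the proposal is an accurate map of where the difficulty lies, and the probabilistic endgame is the right one, but as written it does not constitute a proof of the cited theorem.
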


\begin{proof}[Proof of \cref{thm:hadwiger}]
  Suppose that $k$ is sufficiently large (e.g., $k \geq 10$), as otherwise the theorem holds trivially.
  Let $G$ be an $n$ vertex graph with $\chi(G) = k$.
  Let $G' = (V',E')$ be a $k$-critical subgraph of $G$, i.e., $G'$ is a subgraph of $G$ such that $\chi(G') = k$
  but removing any edge from $G'$ reduces its chromatic number.
  Then, every vertex of $G'$ has degree at least $k-1$,
  and hence, $|V'| \geq k$ and $|E'| \geq \frac{k-1}{2}|V'| \geq \frac{k}{4}|V'|$.
  Let $H = (V',E_H) \sim G'_{1/2}$. Then, by Chernoff bound we have
  $\Pr[|E_H| < k \cdot |V'|/8] < \exp(-\Omega(k \cdot |V'|)) < \exp \left(-\Omega(k^2) \right)$.
  Applying \cref{thm:kostochka} to $H$ we get that $h(H) \geq \Omega \left( \frac{k}{\sqrt{\log(k)}} \right)$ with probability $1 - \exp \left(-\Omega(k^2) \right)$.
  This completes the proof of \cref{thm:hadwiger}.
\end{proof}

\section{Open problems}\label{sec:open}

The most obvious open problem in this context is the original question of Bukh.
\begin{question}\label{prob:subgraphs}
    Is there a constant $c>0$ such that $\E[\chi(G_{1/2})]>c \cdot \frac{\chi(G)}{\log \chi(G)}$ for all $G$?
\end{question}

Other than Bukh's original question, this paper raises several additional problems which we mention below.

\begin{question}\label{prob:subgraphs}
  Is it true that every graph $G$ contains an induced subgraph $G' \seq G$
  such that $\chi(G') \geq c \cdot \chi(G)$, and $\alpha(G') \leq C \frac{|V(G')|}{\chi(G')}$
  for some absolute constants $C,c>0$?
\end{question}

A positive answer to this question would immediately give a positive answer to Bukh's question using \cref{thm:alpha bound}.
We stress that \cref{prob:subgraphs} does not require any conditions on the number of vertices on $G'$, except for the obvious $|V(G')| \geq \chi(G') \cdot \alpha(G')/C$.

\medskip
In this paper we focused on the chromatic number of $G_p$ for $p = 1/2$.
It would be interesting to extend the current results to other values of $p$.
In particular, it would be interesting to answer \cref{prob:domination} for all values of $p \in (0,1)$.
\begin{question}\label{prob:domination}
  Let $p \in (0,1)$. Is there a constant $C > 1$ and a polynomial $\poly : \R \to \R$ such that
  $\Pr[\chi(G_{p}) \leq d] < \poly(\Pr[\chi(G(k,p)) \leq C \cdot d])$
  holds for every graph $G$, and for all $d \leq k$,
  where $k = \chi(G)$ and $G(k,p)$ is the random Erd\"{o}s-R\'{e}nyi graph model?
\end{question}
\noindent

We conclude with two more problems that we find interesting.
\begin{question}\label{prob:p>1/2}
    Is it true that $\E[\chi(G_{p})] \geq \chi(G)^{p}$ for all $G$ and all $p > 1/2$?
\end{question}

\begin{question}\label{prob:p/2}
    Let $p \in (0,1)$. Is there a constant $c = c(p) > 0$
    such that $\E[\chi(G_{p/2})] \geq c \cdot \E[\chi(G_{p})]$ for all $G$?
\end{question}

\section{Acknowledgements}\label{sec:ack}
I am grateful to Huck Bennett for many helpful discussions related to this work.
Huck made several crucial observations related to this work, but persistently refused to co-author the paper.
I am also thankful to Uriel Feige and Daniel Reichman for very useful discussions related to this paper.

\bibliographystyle{alpha}
\bibliography{percCol}

\end{document}